\documentclass[reqno,11pt]{amsart}

\usepackage{amsmath,amssymb,amsthm,mathtools}
\usepackage{geometry}
\usepackage{enumitem}
\usepackage{microtype}
\usepackage{xcolor}
\usepackage{hyperref}
\usepackage{aliascnt}
\usepackage[nameinlink,capitalise,noabbrev]{cleveref}
\usepackage{mathrsfs}

\allowdisplaybreaks

\hypersetup{
  colorlinks=true,
  linkcolor=blue!55!black,
  citecolor=blue!55!black,
  urlcolor=blue!55!black
}

\numberwithin{equation}{section}

\newtheorem{theorem}{Theorem}[section]
\newaliascnt{proposition}{theorem}
\newtheorem{proposition}[proposition]{Proposition}
\aliascntresetthe{proposition}
\newaliascnt{lemma}{theorem}
\newtheorem{lemma}[lemma]{Lemma}
\aliascntresetthe{lemma}
\newaliascnt{corollary}{theorem}
\newtheorem{corollary}[corollary]{Corollary}
\aliascntresetthe{corollary}
\newaliascnt{claim}{theorem}

\aliascntresetthe{claim}
\theoremstyle{definition}
\newaliascnt{definition}{theorem}

\aliascntresetthe{definition}
\theoremstyle{remark}
\newaliascnt{remark}{theorem}
\newtheorem{remark}[remark]{Remark}
\aliascntresetthe{remark}

\crefname{theorem}{Theorem}{Theorems}
\Crefname{theorem}{Theorem}{Theorems}
\crefname{proposition}{Proposition}{Propositions}
\Crefname{proposition}{Proposition}{Propositions}
\crefname{lemma}{Lemma}{Lemmas}
\Crefname{lemma}{Lemma}{Lemmas}
\crefname{corollary}{Corollary}{Corollaries}
\Crefname{corollary}{Corollary}{Corollaries}
\crefname{claim}{Claim}{Claims}
\Crefname{claim}{Claim}{Claims}
\crefname{definition}{Definition}{Definitions}
\Crefname{definition}{Definition}{Definitions}
\crefname{remark}{Remark}{Remarks}
\Crefname{remark}{Remark}{Remarks}

\newcommand{\R}{\mathbb{R}}
\newcommand{\Sn}{\mathbb{S}}
\newcommand{\cE}{\mathcal{E}}
\newcommand{\cG}{\mathcal{G}}
\newcommand{\cK}{\mathcal{K}}
\newcommand{\cT}{\mathcal{T}}
\newcommand{\cO}{\mathcal{O}}
\newcommand{\eps}{\varepsilon}
\newcommand{\tr}{\operatorname{tr}}
\newcommand{\In}{\operatorname{In}}
\newcommand{\dist}{\operatorname{dist}}
\newcommand{\diag}{\operatorname{diag}}
\newcommand{\PV}{\operatorname{PV}}
\newcommand{\Span}{\operatorname{span}}
\newcommand{\Id}{\mathrm{Id}}
\newcommand{\dd}{\,\mathrm{d}}

\title{Strict Concavity of the Torsion Function for the Restricted Half-Laplacian in Bounded Convex Domains}
\author{Jiahuan Li}
\address{School of Mathematical Sciences, University of Science and Technology of China, Hefei 230026, Anhui Province, China.}
\email{jiahuan@mail.ustc.edu.cn}

\author{Shujun Shi}
\address{School of Mathematical Sciences, Harbin Normal University, Harbin 150025, Heilongjiang Province, China.}
\email{shjshi@hrbnu.edu.cn}
\date{}

\begin{document}
\pagestyle{plain}

\begin{abstract}
Let $D\subset\R^n$, $n\ge2$, be a bounded convex domain, and let $u_D$ be the torsion function for the restricted half-Laplacian. We prove that $D^2u_D$ is negative definite at every point of $D$. The argument is based on the reflected harmonic extension in a slit domain. Quantitative Schauder estimates in slit domains yield parameter-uniform estimates for the first and second derivatives of the edge remainder; a Schur-complement calculation then determines the inertia of the extended Hessian near the slit edge. Superharmonicity of the logarithmic Hessian determinant and the Gleason--Wolff zero-set theorem exclude interior degeneracy. A method of continuity starting from the unit ball proves the result for smooth uniformly convex domains, and an exhaustion argument treats arbitrary bounded convex domains.
\end{abstract}

\medskip
\subjclass[2020]{Primary 35R11; Secondary 35B50, 35J05, 26B25.} 
\keywords{restricted fractional Laplacian; torsion function; strict concavity; harmonic extension; Hessian inertia; method of continuity.}

\maketitle

\section{Introduction}

For a bounded open set $D\subset\R^n$, we consider the restricted half-Laplacian torsion problem
\begin{equation}\label{eq:torsion}
 \begin{cases}
 (-\Delta)^{1/2}u=1&\text{in }D,\\
 u=0&\text{in }D^c,
 \end{cases}
 \qquad
 (-\Delta)^{1/2}f(x)=c_n\,\PV\!\int_{\R^n}
 \frac{f(x)-f(z)}{|x-z|^{n+1}}\,\dd z,
\end{equation}
where $c_n=\Gamma((n+1)/2)/\pi^{(n+1)/2}$. For a symmetric matrix, the inequalities $A<0$ and $A>0$ mean negative and positive definiteness, respectively. We determine the pointwise sign of $D^2u_D$ when $D$ is convex.

The solution of \eqref{eq:torsion} is the torsion function for the Dirichlet realization of the restricted fractional Laplacian. Equivalently, up to the normalization in \eqref{eq:torsion}, $u_D(x)$ is the expected lifetime in $D$ of the isotropic Cauchy process started at $x$; see \cite{Getoor1961,BogdanEtAl2009}. The torsion function therefore has both an analytic interpretation through the nonlocal Dirichlet problem and a probabilistic interpretation through the Cauchy process. The exterior condition in \eqref{eq:torsion} is imposed on all of $D^c$, rather than only on $\partial D$.

For comparison, let $v$ denote the classical torsion function, which satisfies
\[
 -\Delta v=1\quad\text{in }D,
 \qquad v=0\quad\text{on }\partial D.
\]
Then $v^{1/2}$ is concave in every bounded convex domain. Makar-Limanov proved the planar result \cite{MakarLimanov1971}; Borell and Kennington established the result in arbitrary dimension by convexity maximum principles \cite{Borell1985,Kennington1985}, following related ideas in \cite{Korevaar1983}. Equivalently, the classical torsion function is $1/2$-concave. Related results include log-concavity of the first Dirichlet eigenfunction \cite{BrascampLieb1976}, convexity for semilinear elliptic equations \cite{CaffarelliFriedman1985}, and the constant-rank theorem of Korevaar--Lewis \cite{KorevaarLewis1987}. Constant-rank and microscopic convexity principles for fully nonlinear elliptic equations were subsequently developed in \cite{CaffarelliGuanMa2007,BianGuan2009}.

Another class of arguments derives geometric information from subharmonic or superharmonic quantities constructed from the Hessian, the gradient, or the second fundamental forms of level sets. Results for harmonic and $p$-harmonic level sets appear in \cite{MaOuZhang2010,MaZhang2021}, and Hessian estimates for Green functions and related elliptic problems appear in \cite{MaShiYe2012,Shi2015}. Jia--Ma--Shi proved quantitative convexity estimates through superharmonic quantities for the classical torsion problem and also gave strict $1/2$-concavity   in \cite{JiaMaShi2023}, with related estimates for the Green function and the first Dirichlet eigenfunction in \cite{JiaMaShiGreen2023}. For harmonic gradient mappings, Gleason--Wolff \cite{GleasonWolff1991} proved a higher-dimensional Lewy theorem under a bound on the Morse index of the Hessian and also gave superharmonicity of the logarithmic Hessian determinant needed in the present argument.

Fewer pointwise concavity results are available for nonlocal operators. The harmonic-extension realization of $(-\Delta)^{1/2}$, and more generally the degenerate extension for fractional powers, is fundamental \cite{CaffarelliSilvestre2007}; boundary regularity for the restricted fractional Dirichlet problem is developed in \cite{RosOtonSerra2014,Grubb2015}. Related results for fractional ground states include mid-concavity on intervals and rectangles \cite{BanuelosKulczyckiMendez2006}, as well as superharmonicity results for particular stability indices and geometries \cite{BanuelosDeBlassie2015,AbatangeloJarohs2024}.

The result most directly preceding the present work is Kulczycki's theorem \cite{Kulczycki2017}: for $n=2$, the solution of \eqref{eq:torsion} is concave in every bounded convex domain. His proof uses the reflected harmonic extension and a method of continuity based on Lewy's three-dimensional Hessian theorem. Kulczycki also showed that concavity may fail for torsion functions of $(-\Delta)^{\alpha/2}$, $1<\alpha<2$, on sufficiently narrow bounded cones. The case $\alpha=1$ is therefore an endpoint problem requiring a separate argument. He conjectured that the conclusion for $\alpha=1$ holds in every dimension. We prove this conjecture and obtain the stronger pointwise Hessian inequality below.

\begin{theorem}\label{thm:main}
Let $n\ge2$, and let $D\subset\R^n$ be a bounded convex domain. Then the unique weak solution $u_D$ of \eqref{eq:torsion} belongs to $C^\infty(D)$ and
\begin{equation}\label{eq:main-strict}
 D^2u_D(x)<0\qquad(x\in D).
\end{equation}
Consequently, the restriction of $u_D$ to every line segment with distinct endpoints in $D$ is strictly concave.
\end{theorem}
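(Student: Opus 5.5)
We will work with the reflected harmonic extension, following Kulczycki's approach and the outline in the abstract. For $D$ smooth and uniformly convex, let $u=u_D$ and let $U$ be the harmonic function on the slit domain $\Omega=\R^{n+1}\setminus(\overline{D^c}\times\{0\})$ obtained by extending $u$ harmonically to $\{y>0\}$ and reflecting evenly across $y=0$. Then $U(x,0)=u(x)$ on $D$. The equation $(-\Delta)^{1/2}u=1$ says $-\partial_yU(x,0^+)=1$. Reflection turns this into a jump in the normal derivative across $D\times\{0\}$. To get a genuinely harmonic function across $D\times\{0\}$, we subtract $|y|$: the function $W(x,y)=U(x,y)+|y|$ is harmonic in $\Omega$, and $D_x^2W=D_x^2U$. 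The target becomes the $(n+1)\times(n+1)$ Hessian $H=D^2W$ on $\Omega$. Since $\Delta W=0$ gives $\operatorname{tr}H=0$, $H$ cannot be negative definite. The natural condition is that $H$ has inertia $(1,n,0)$: one positive eigenvalue, $n$ negative ones, and $\det H\neq0$. On the plane $y=0$ we have $\partial_{x_iy}W=0$ by symmetry, so $H$ is block diagonal there with $\partial_{yy}W=-\Delta_xu$. The inertia condition at points of $D\times\{0\}$ then gives $D^2u<0$ provided $\partial_{yy}W>0$ there. This last requirement should follow from superharmonicity of $u$, which we obtain by comparing with the concavity along the way.

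\textbf{Step 1 (edge asymptotics).} Near the edge $\partial D\times\{0\}$ we write $W$ in local coordinates $(s,t)$ normal to the edge, with $s$ the distance to $\partial D$ in $\R^n$ and $t=y$. The leading term is $c(\xi)\,\mathrm{Re}\sqrt{s+it}$ plus $|t|$, and the remainder is of higher order. Quantitative Schauder estimates in slit domains give $C^{2}$-type bounds on this remainder after the square-root blow-up. These bounds are uniform along a continuity family of domains with controlled curvature. A Schur complement computation then shows that the inertia of $H$ is $(1,n,0)$ in a uniform neighborhood of the edge. In the $2\times2$ normal block, $\sqrt{z}$ has Hessian with eigenvalues of opposite signs $\sim\pm|z|^{-3/2}$. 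The tangential block is negative, to leading order $-c\,\kappa\,|z|^{-1/2}$ times the second fundamental form of $\partial D$, and here uniform convexity is essential. Cross terms are lower order. The same argument, with $|y|$ dominating, handles large $|(x,y)|$, where $W\sim |y|+O(|X|^{1-n})$ behaves like the Poisson kernel; a separate asymptotic check is needed there.

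\textbf{Step 2 (no interior degeneracy).} On the open set where $H$ has inertia $(1,n,0)$, the function $\log|\det D^2W|$ is superharmonic, by Gleason--Wolff. Suppose the set $\{\det H\ne0, \text{ inertia }(1,n,0)\}$ is not all of $\Omega$. Take a boundary point of its connected component. There $\det H=0$, so $\log|\det H|\to-\infty$, and the minimum principle for superharmonic functions is consistent with this. We therefore argue in a different way, via the zero set: the Gleason--Wolff zero-set theorem for gradients of harmonic functions whose Hessian has bounded Morse index says the degeneracy set is empty, or has codimension forcing a contradiction with the continuity of eigenvalue signs. The upshot is that inertia cannot change in $\Omega$. \textbf{Step 3 (continuity).} We join $B_1$ to $D$ through smooth uniformly convex domains $D_\tau$, for example by Minkowski combination. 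For the ball, explicitly $u=c(1-|x|^2)^{1/2}$, which has $D^2u<0$, and the inertia of $H$ can be checked directly. The set of $\tau$ for which $H_\tau$ has inertia $(1,n,0)$ everywhere is open, by Step 1 uniformity near the edge and infinity together with smooth dependence on compacts. It is closed, because limits have $\le1$ positive and $\le n$ negative eigenvalues, and Step 2 rules out degeneracy. \textbf{Step 4.} For a general bounded convex $D$, exhaust it by smooth uniformly convex $D_k\uparrow D$. Then $u_{D_k}\to u_D$ locally in $C^\infty$, by interior estimates for harmonic $W$, so $D^2u_D\le0$. Strictness follows by applying Step 2 to the limit $W$ on compacts of $D\times\{0\}$ neighbourhoods, since the Hessian limit still has Morse index bounded. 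The main obstacle I expect is Step 1, namely obtaining remainder bounds strong enough to survive the Schur complement uniformly in the family.
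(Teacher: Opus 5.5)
\textbf{Main gap: openness in Step~3.} The openness half of Step~3 does not go through, and the problem starts in Step~1. You run the continuity argument on the unperturbed extension $W$, but $D^2W$ is not uniformly nondegenerate near the exterior part of the slit. Since $U=0$ on $(\R^n\setminus\overline D)\times\{0\}$, the one-sided limit of the Hessian there is
\[
 D^2W(x,0^+)=\begin{pmatrix}0&\nabla g(x)\\ \nabla g(x)^{\mathsf T}&0\end{pmatrix},
 \qquad
 g(x)=c_n\int_D\frac{u_D(z)}{|x-z|^{N}}\,\dd z .
\]
This matrix has rank at most $2$, hence at least $n-1\ge1$ zero eigenvalues. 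For small $y>0$ the inertia is then decided by the sign of $y\,D^2g$ on $\nabla g^\perp$. That is a quasi-concavity property of $g$, which is part of what must be proved and is not stable under $C^2$-small perturbations.

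The same degeneration breaks your other two uniformity claims:
\begin{itemize}
\item \emph{At the edge.} The leading tangential term is $-a\,h_sS=-\tfrac{a}{2}\rho^{-1/2}\cos(\theta/2)\,S$, not $-c|z|^{-1/2}S$. It vanishes on the exterior face $\theta\to\pm\pi$, where the $O(\rho^{1/2})$ remainder is no longer absorbed and $\det D^2W\to0$.
\item \emph{In the far field near $y=0$.} The angular eigenvalues are $-N\mu yR^{-N-2}\to0$. The term $|y|$ has zero Hessian, so it dominates nothing.
\end{itemize}
So after removing neighbourhoods of the edge and of infinity, you are not left with a compact set on which $|\det D^2W_{D_\tau}|$ is bounded below. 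Consequently $C^2$-closeness for nearby $\tau$ does not transport the inertia.

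\textbf{The missing idea.} The paper uses the harmonic perturbation $V_{D_\tau,\eps}=W_{D_\tau}+\eps Q_n$ with $Q_n=\tfrac12(ny^2-|x|^2)$. Its $x$-block $-\eps I_n$ does three things:
\begin{itemize}
\item it turns the exterior boundary matrix into $\begin{pmatrix}-\eps I_n&p\\ p^{\mathsf T}&n\eps\end{pmatrix}$, with inertia $(1,n)$ and $|\det|\ge n\eps^{n+1}$;
\item it makes the edge Schur complement $\le-\frac{\eps}{2}I_{n-1}$ for $\rho<c_e\eps^2$;
\item it controls the far field near $y=0$.
\end{itemize}
The continuity argument is run for fixed $\eps$ on the truncated domain $\cG_{D_\tau}\cap B_M^N$. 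Letting $\eps\downarrow0$ then yields only the closed condition $n_+(D^2W_D)\le1$, and nondegeneracy is restored by the Gleason--Wolff zero-set theorem. Your closedness argument via that theorem is sound once a nonsingular point is supplied; it is openness that fails.

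\textbf{The sign of $W_{yy}$.} As written, your route to $W_{yy}>0$ on $D\times\{0\}$ is circular. We have $W_{yy}=-\Delta_xu$, and inertia $(1,n)$ of $\diag(D_x^2u,W_{yy})$ is equally compatible with $W_{yy}<0$ and $\In D_x^2u=(1,n-1)$. The paper proves the sign independently. The function $Z=W_y$ is harmonic in $\R^N_+$, vanishes on $D\times\{0\}$, equals $1+g>0$ on the exterior part of $\{y=0\}$, tends to $1$ at infinity, and is $\ge-C\delta^{1/2}$ near the edge. Hence $Z>0$ by the minimum principle, and $W_{yy}=Z_y>0$ by the Hopf lemma. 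If you instead meant to track the sign of $W_{yy}$ along the path, using that it cannot vanish where $\det D^2W_{D_\tau}\ne0$, that also works. But it must be stated, and it needs the full inertia for every $\tau$.

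\textbf{Use of the zero-set theorem.} The theorem says: if $D^2q$ has at most one negative eigenvalue on a connected domain and $\det D^2q$ vanishes at one point, then it vanishes identically. It says nothing about the codimension of a degenerate set. You therefore need a nonsingular point in the same connected domain. Applying the theorem only on neighbourhoods of $D\times\{0\}$, as in your Step~4, proves nothing. The paper applies it to $q=-W$ on the whole connected set $\cG_D$, using the point $(0,R)$ with $R$ large, where $D^2W=N\mu R^{-N-1}\diag(-I_n,n)+O(R^{-N-2})$.
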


The conclusion $D^2u_D<0$ is stronger than strict concavity understood only as a chord inequality: it excludes every nontrivial null vector of the Hessian at every interior point. It is invariant under similarities. More precisely, if $\widetilde D=aD+b$ with $a>0$, then
\begin{equation}\label{eq:similarity-scaling}
 u_{\widetilde D}(x)=a\,u_D\!\left(\frac{x-b}{a}\right),
 \qquad
 D^2u_{\widetilde D}(x)=a^{-1}D^2u_D\!\left(\frac{x-b}{a}\right).
\end{equation}

We outline the argument. Let $U_D$ denote the Poisson extension of $u_D$ to $\R^{n+1}_+$ and define
\[
 W_D(x,y)=U_D(x,|y|)+|y|.
\]
The Neumann condition on $D\times\{0\}$ implies that $W_D$ extends harmonically across this set; hence $W_D$ is harmonic in the slit domain
\[
 \mathcal G_D=\R^{n+1}\setminus(D^c\times\{0\}).
\]
It is enough to prove that $D^2W_D$ has one positive and $n$ negative eigenvalues in $\mathcal G_D$ and that $(W_D)_{yy}(x,0)>0$ for $x\in D$.

For a smooth uniformly convex domain, choose a smooth path $\{D_\tau\}_{\tau\in[0,1]}$ from the unit ball to $D$ and introduce the harmonic perturbation
\[
 V_{D_\tau,\eps}=W_{D_\tau}+\eps Q_n,
 \qquad Q_n(x,y)=\tfrac12(ny^2-|x|^2).
\]
Quantitative Schauder estimates of De Silva--Savin yield a slit-edge expansion with differentiated remainder estimates uniform along the domain path. A Schur-complement calculation then proves that $D^2V_{D_\tau,\eps}$ has inertia $(1,n)$ in a tubular neighborhood of $\partial D_\tau\times\{0\}$. Taylor expansion on $D_\tau^c\times\{0\}$ away from the edge and the far-field expansion of the Poisson kernel give the same inertia near the other boundary components of a truncated slit domain.

On the complementary interior subdomain, we apply the logarithmic determinant inequality to $q=-V_{D_\tau,\eps}$. If $n_-(D^2q)=1$, then
\[
 \Delta\log|\det D^2q|\le0.
\]
The minimum principle extends a positive boundary lower bound for $|\det D^2q|$ to the interior. The explicit unit-ball computation is the base point of the continuation argument, which yields the inertia statement for every $\tau\in[0,1]$. Passing to the limit as $\eps\downarrow0$ gives $n_+(D^2W_D)\le1$.

The limiting Hessian may be singular. The Gleason--Wolff zero-set theorem, together with a nondegenerate point obtained from the far-field expansion, implies that $\det D^2W_D$ is nowhere zero in $\mathcal G_D$. Hence $D^2W_D$ has inertia $(1,n)$ throughout the slit domain. Finally, the maximum principle and the Hopf boundary lemma applied to $Z=(W_D)_y$ give $(W_D)_{yy}(x,0)>0$ on $D\times\{0\}$. The block decomposition of $D^2W_D(x,0)$ then yields $D_x^2u_D(x)<0$. Approximation by smooth uniformly convex subdomains gives the general case.

The parameter-uniform edge estimate is proved in \cref{prop:uniform-edge} from the quantitative Schauder theory of De Silva--Savin for slit domains \cite{DeSilvaSavin2015}. We estimate exactly the three Hessian blocks that enter the Schur complement. Derivatives in the extension variable are obtained from scale-invariant interior and flat Dirichlet boundary estimates. The second external result used in the proof is the Gleason--Wolff zero-set theorem \cite{GleasonWolff1991}; the logarithmic determinant inequality is given in \cref{thm:det-min}.

The paper is organized as follows. Section~2 establishes the variational formulation and the reflected harmonic extension. Section~3 proves the logarithmic Hessian determinant inequality and records the Gleason--Wolff zero-set theorem. Section~4 proves the parameter-uniform slit-edge expansion and derives the Hessian inertia near the slit edge. Section~5 treats the exterior Dirichlet boundary away from the edge and the far-field region. Section~6 contains the explicit unit-ball computation, and Section~7 proves stability under smooth domain perturbations. Sections~8--10 apply the method of continuity, prove strict concavity for smooth uniformly convex domains, and pass to general convex domains. The appendix contains the determinant computation for the unit ball.

\section{Variational formulation and reflected harmonic extension}\label{sec:extension}

\subsection{Weak solutions and comparison}

Let $H^{1/2}_0(D)$ denote the closure of $C_c^\infty(D)$ in $H^{1/2}(\R^n)$. At the critical exponent $1/2$, this definition is stronger than requiring an $H^{1/2}(\R^n)$ function to vanish almost everywhere in $D^c$. The part-form characterization for the regular fractional Dirichlet form gives
\[
 H^{1/2}_0(D)
 =\{v\in H^{1/2}(\R^n):\widetilde v=0
 \text{ quasi-everywhere on }D^c\},
\]
where $\widetilde v$ denotes the quasi-continuous representative; see \cite[Section~4.4]{FukushimaOshimaTakeda2011}. Define the symmetric bilinear form
\begin{equation}\label{eq:energy}
 \cE(v,w)
 =\frac{c_n}{2}\iint_{\R^n\times\R^n}
 \frac{(v(x)-v(z))(w(x)-w(z))}{|x-z|^{n+1}}\,\dd x\,\dd z.
\end{equation}
The weak solution $u_D$ of \eqref{eq:torsion} is characterized by
\begin{equation}\label{eq:weak}
 \cE(u_D,\zeta)=\int_D\zeta\,\dd x
 \qquad(\zeta\in H^{1/2}_0(D)).
\end{equation}

\begin{lemma}\label{lem:variational}
For every bounded open set $D$, problem \eqref{eq:weak} has a unique solution $u_D\in H^{1/2}_0(D)$. Moreover $u_D\ge0$, and
\begin{equation}\label{eq:domain-monotonicity}
 D_1\subset D_2
 \quad\Longrightarrow\quad
 0\le u_{D_1}\le u_{D_2}\quad\text{a.e. in }\R^n.
\end{equation}
\end{lemma}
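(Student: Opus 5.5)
Existence and uniqueness come from the Lax--Milgram (in fact Riesz) theorem on the Hilbert space $H^{1/2}_0(D)$ with inner product $\cE$. The first task is to check that $\cE$ is coercive on $H^{1/2}_0(D)$ when $D$ is bounded. For this I would use a fractional Poincaré inequality: for $v\in C_c^\infty(D)$ with $D\subset B_R$, we have $v=0$ outside $B_R$. Restricting the double integral in \eqref{eq:energy} to $x\in D$ and $z\in B_{2R}^c$ gives
\[
\cE(v,v)\ge \frac{c_n}{2}\int_D v(x)^2\int_{|z|>2R}\frac{\dd z}{|x-z|^{n+1}}\,\dd x\ge c(n,R)\,\|v\|_{L^2}^2 .
\]
This bound extends by density to the closure $H^{1/2}_0(D)$. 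Hence $\cE(v,v)+\|v\|_2^2\le C\,\cE(v,v)$, so $\cE$ is an equivalent inner product there.

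The functional $\zeta\mapsto\int_D\zeta$ is bounded, since $|\int_D\zeta|\le |D|^{1/2}\|\zeta\|_2\le C\,\cE(\zeta,\zeta)^{1/2}$. Riesz representation therefore yields a unique $u_D\in H^{1/2}_0(D)$ solving \eqref{eq:weak}. Equivalently, $u_D$ is the unique minimizer of $J(v)=\tfrac12\cE(v,v)-\int_D v$.

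For nonnegativity I would use the minimization characterization together with the normal contraction property. Since $H^{1/2}_0(D)$ is the domain of the part of a regular Dirichlet form, it is stable under $v\mapsto|v|$ (via the quasi-everywhere description, or directly because $|\cdot|$ is $1$-Lipschitz and preserves compact support of approximants after mollification). We also have $\cE(|v|,|v|)\le\cE(v,v)$, because $\big||a|-|b|\big|\le|a-b|$, while $\int_D|v|\ge\int_D v$. Hence $J(|u_D|)\le J(u_D)$, and uniqueness of the minimizer gives $u_D=|u_D|\ge0$.

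For monotonicity, let $D_1\subset D_2$, so that $H^{1/2}_0(D_1)\subset H^{1/2}_0(D_2)$. Set $w=(u_{D_1}-u_{D_2})^+$. The main obstacle is justifying $w\in H^{1/2}_0(D_1)$. I would argue quasi-everywhere: the quasi-continuous representative of $u_{D_1}$ vanishes q.e. on $D_1^c$ and that of $u_{D_2}$ is $\ge0$ q.e. (since $u_{D_2}\ge 0$ a.e. implies this for quasi-continuous versions). Therefore $\widetilde w=(\widetilde u_{D_1}-\widetilde u_{D_2})^+=0$ q.e. on $D_1^c$, and $w\in H^{1/2}(\R^n)$ by the contraction property. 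The Fukushima--Oshima--Takeda characterization then places $w$ in $H^{1/2}_0(D_1)\subset H^{1/2}_0(D_2)$.

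Testing both equations \eqref{eq:weak} with $\zeta=w$ and subtracting gives $\cE(u_{D_1}-u_{D_2},w)=0$. Writing $f=u_{D_1}-u_{D_2}=f^+-f^-$, the standard pointwise inequality $(f(x)-f(z))(f^+(x)-f^+(z))\ge (f^+(x)-f^+(z))^2$ yields $\cE(f,f^+)\ge\cE(f^+,f^+)$. Therefore $\cE(w,w)=0$, so $w=0$ by coercivity, and $u_{D_1}\le u_{D_2}$ a.e.
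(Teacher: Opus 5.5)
Your proof is correct and follows the paper's argument: Lax--Milgram/Riesz on $H^{1/2}_0(D)$ via a fractional Poincar\'e inequality, and for monotonicity the same quasi-everywhere argument placing $w=(u_{D_1}-u_{D_2})^+$ in $H^{1/2}_0(D_1)$, followed by the pointwise inequality for positive parts (the right-hand sides cancel because $w=0$ a.e.\ on $D_2\setminus D_1$, a step you leave implicit). The variations are minor: you prove the Poincar\'e inequality explicitly from the kernel tail, you bound the load by Cauchy--Schwarz instead of the fractional Sobolev inequality, and you get $u_D\ge0$ from $J(|u_D|)\le J(u_D)$ plus uniqueness of the minimizer rather than by testing with $u_D^-$; all of these rest on the same normal-contraction property.
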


\begin{proof}
We separate the three assertions.

\emph{Existence and uniqueness.}
On a bounded set, the fractional Poincar\'e inequality controls the full $H^{1/2}$ norm by the Gagliardo seminorm appearing in $\cE$. Thus $\cE$ is coercive on $H^{1/2}_0(D)$. The functional
\[
 \zeta\longmapsto\int_D\zeta\,\dd x
\]
is continuous on the same space by the fractional Sobolev inequality and H\"older's inequality. The Lax--Milgram theorem therefore gives a unique $u_D$ satisfying \eqref{eq:weak}.

\emph{Positivity.}
For $t\in\R$, set $t^+=\max\{t,0\}$ and $t^-=\max\{-t,0\}$. The normal-contraction property of the fractional Dirichlet form implies that $u_D^-\in H^{1/2}_0(D)$. For real numbers $a,b$,
\[
 (a-b)(a^- -b^-)\le-|a^- -b^-|^2.
\]
We may therefore use $u_D^-$ as a test function in \eqref{eq:weak} and obtain
\[
 0\le\int_Du_D^-\,\dd x
 =\cE(u_D,u_D^-)
 \le-\cE(u_D^-,u_D^-)\le0.
\]
It follows that $\cE(u_D^-,u_D^-)=0$. Coercivity gives $u_D^-=0$, hence $u_D\ge0$ almost everywhere.

\emph{Domain monotonicity.}
Assume $D_1\subset D_2$ and set
\[
 w=(u_{D_1}-u_{D_2})^+.
\]
Normal contractions preserve the form domain, so $w\in H^{1/2}_0(D_2)$. Let tildes denote quasi-continuous representatives. On $D_1^c$ one has $\widetilde u_{D_1}=0$ quasi-everywhere and $\widetilde u_{D_2}\ge0$ quasi-everywhere; consequently, $\widetilde w=0$ quasi-everywhere on $D_1^c$. The characterization stated above therefore gives $w\in H^{1/2}_0(D_1)$. Thus $w$ is admissible in both weak formulations. Since $w=0$ almost everywhere in $D_2\setminus D_1$, subtracting the two identities yields
\[
 \cE(u_{D_1}-u_{D_2},w)=0.
\]
The elementary inequality
\[
 (a-b)(a^+-b^+)\ge|a^+-b^+|^2
\]
then implies $0\ge\cE(w,w)$. Thus $w=0$, which is exactly $u_{D_1}\le u_{D_2}$. Together with positivity, this proves \eqref{eq:domain-monotonicity}.
\end{proof}

\subsection{Harmonic extension and reflected slit-domain formulation}

Put $N=n+1$, write $X=(x,y)\in\R^n\times\R$, denote the last coordinate vector by
\[
 e_y=(0,\ldots,0,1)\in\R^N,
\]
and set
\[
 \R^N_+:=\{(x,y)\in\R^n\times\R:y>0\}.
\]
We denote by $B_R^k(X_0)$ the Euclidean ball of radius $R$ in $\R^k$; the center and the superscript are omitted when they are clear from context. For positive quantities $A$ and $B$, the notation $A\simeq B$ means that $cB\le A\le CB$ with constants independent of the parameters under consideration. For $y>0$, define
\begin{equation}\label{eq:poisson-extension}
 U_D(x,y)
 =c_n\int_D
 \frac{y\,u_D(z)}{(|x-z|^2+y^2)^{N/2}}\,\dd z.
\end{equation}
The kernel in \eqref{eq:poisson-extension} is the Poisson kernel for the upper half-space. Taking the Fourier transform only in the $x$ variables gives
\[
 \widehat U_D(\xi,y)=e^{-y|\xi|}\widehat u_D(\xi).
\]
Consequently, $U_D$ is harmonic in $\R^{N}_+$, and the Dirichlet-to-Neumann operator $-\partial_y$ has Fourier symbol $|\xi|$. Equation \eqref{eq:weak} is therefore equivalent to the mixed boundary conditions
\begin{equation}\label{eq:mixed-boundary}
 \Delta_{x,y}U_D=0\quad(y>0),
 \qquad
 (U_D)_y=-1\quad\text{on }D\times\{0\},
 \qquad
 U_D=0\quad\text{on }D^c\times\{0\}.
\end{equation}
The boundary conditions are understood in the variational trace sense and hold classically on compact subsets of the relative interiors of $D\times\{0\}$ and $D^c\times\{0\}$.
Define the even, affine-corrected reflection
\begin{equation}\label{eq:W}
 W_D(x,y)=U_D(x,|y|)+|y|.
\end{equation}
For $y>0$ and $x\in D$, one has $\partial_y(U_D+y)=0$ at $y=0$. Hence the even reflection in \eqref{eq:W} is weakly harmonic across $D\times\{0\}$. Interior elliptic regularity then gives
\begin{equation}\label{eq:slit-domain}
 W_D\in C^\infty(\cG_D),
 \qquad
 \Delta W_D=0\quad\text{in }\cG_D,
 \qquad
 \cG_D:=\R^{N}\setminus(D^c\times\{0\}).
\end{equation}
For $y>0$, the affine correction does not affect the Hessian:
\begin{equation}\label{eq:hessian-same}
 D^2W_D=D^2U_D.
\end{equation}
At every $x\in D$, the function $W_D$ is smooth in a full neighborhood of $(x,0)$, it is even in $y$, and
\begin{equation}\label{eq:bottom-block}
 D^2W_D(x,0)
 =\begin{pmatrix}
 D_x^2u_D(x)&0\\
 0&(W_D)_{yy}(x,0)
 \end{pmatrix}.
\end{equation}

For a real symmetric matrix $A$, let $n_+(A)$, $n_-(A)$, and $n_0(A)$ denote the numbers of positive, negative, and zero eigenvalues, counted with multiplicity. If $A$ is nonsingular, write
\[
 \In A=(n_+(A),n_-(A)),
\]
and call this pair the \emph{inertia} of $A$. We shall prove
\begin{equation}\label{eq:target-inertia}
 \In D^2W_D=(1,n).
\end{equation}
In dimension $N=3$, the trace-free condition and the sign of the determinant determine this inertia. In higher dimensions they do not; one must control the number of positive eigenvalues throughout the argument.

Introduce the harmonic quadratic polynomial
\begin{equation}\label{eq:Q}
 Q_n(x,y)=\frac12(ny^2-|x|^2),
 \qquad
 D^2Q_n=\diag(-I_n,n),
\end{equation}
and set
\begin{equation}\label{eq:V}
 V_{D,\eps}=W_D+\eps Q_n,
 \qquad \eps>0.
\end{equation}
The $x$-block of $D^2Q_n$ is $-I_n$. The method of continuity is applied to $V_{D,\eps}$ for fixed $\eps>0$; the unperturbed extension is recovered by letting $\eps\downarrow0$.

\section{the Gleason--Wolff logarithmic Hessian determinant inequality and  zero-set theorem}\label{sec:det-min}

The superharmonicity of the logarithmic Hessian determinant in this section is due to Gleason–Wolff \cite{GleasonWolff1991}. For convenience, we provide  a self-contained proof through a trace-free symmetric three-tensor inequality. We then state the Gleason--Wolff zero-set theorem.

\subsection{An algebraic inequality for trace-free symmetric three-tensors}

The following algebraic lemma is the pointwise estimate used in the proof of \cref{thm:det-min}.

\begin{lemma}\label{lem:tensor}
Let $m\ge1$, let $\lambda_1,\dots,\lambda_m>0$, and put
\[
 \Lambda=\sum_{\alpha=1}^m\lambda_\alpha.
\]
Set $\lambda_0=-\Lambda$. Let $T=(T_{ijk})_{0\le i,j,k\le m}$ be a completely symmetric three-tensor satisfying
\begin{equation}\label{eq:trace-free-tensor}
 \sum_{i=0}^m T_{iik}=0
 \qquad(0\le k\le m).
\end{equation}
Then
\begin{equation}\label{eq:tensor-ineq}
 \sum_{i,j,k=0}^m\frac{T_{ijk}^2}{\lambda_i\lambda_j}\ge0.
\end{equation}
\end{lemma}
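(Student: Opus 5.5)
The plan is to expand the left-hand side of \eqref{eq:tensor-ineq} by the orbits of index triples under permutation, use \eqref{eq:trace-free-tensor} to eliminate the entries $T_{000}$ and $T_{00\alpha}$, and reduce what remains to a few decoupled quadratic forms. Each of these forms is nonnegative by an exact two-variable identity plus a convexity (superadditivity) argument. Throughout, Greek indices range over $1,\dots,m$.

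\emph{Step 1: weights of the orbits.} Since $T$ is completely symmetric, the coefficient of $T_{ijk}^2$ in $S=\sum_{i,j,k}T_{ijk}^2/(\lambda_i\lambda_j)$ is obtained by summing $1/(\lambda_a\lambda_b)$ over the ordered arrangements of the multiset $\{i,j,k\}$. This gives three cases.
\begin{itemize}
\item For three distinct indices the weight is $2(\lambda_i+\lambda_j+\lambda_k)/(\lambda_i\lambda_j\lambda_k)$. It is positive if none is $0$. For $\{0,\alpha,\beta\}$ it equals $2(\Lambda-\lambda_\alpha-\lambda_\beta)/(\Lambda\lambda_\alpha\lambda_\beta)\ge0$, since $\Lambda$ contains both $\lambda_\alpha$ and $\lambda_\beta$.
\item For a multiset $\{i,i,k\}$ with $i\ne k$ the weight is $(\lambda_k+2\lambda_i)/(\lambda_i^2\lambda_k)$.
\item For $\{i,i,i\}$ the weight is $1/\lambda_i^2$.
\end{itemize}
Hence the only possibly negative weights belong to $T_{00\alpha}$, with weight $-(2\Lambda-\lambda_\alpha)/(\Lambda^2\lambda_\alpha)$, and to $T_{\alpha\alpha0}$, with weight $(\Lambda-2\lambda_\alpha)/(\Lambda\lambda_\alpha^2)$.

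\emph{Step 2: grouping.} Drop the nonnegative distinct-index orbits. Distribute the rest into the following groups.
\begin{itemize}
\item Group $A$ collects $T_{000}$ and $T_{\alpha\alpha0}$. Set $x_\alpha=T_{\alpha\alpha0}$, so that $T_{000}=-\sum x_\alpha$ by \eqref{eq:trace-free-tensor} with $k=0$. Then
\[
 Q_A=\frac{(\sum_\alpha x_\alpha)^2}{\Lambda^2}+\sum_\alpha\frac{\Lambda-2\lambda_\alpha}{\Lambda\lambda_\alpha^2}x_\alpha^2 .
\]
\item For each $\alpha$, group $B_\alpha$ collects $T_{00\alpha}$ and $T_{\beta\beta\alpha}$. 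Set $y_\beta=T_{\beta\beta\alpha}$, so that $T_{00\alpha}=-\sum_\beta y_\beta$. Then
\[
 Q_{B_\alpha}=-\frac{2\Lambda-\lambda_\alpha}{\Lambda^2\lambda_\alpha}\Bigl(\sum_\beta y_\beta\Bigr)^2+\frac{y_\alpha^2}{\lambda_\alpha^2}+\sum_{\beta\ne\alpha}\frac{\lambda_\alpha+2\lambda_\beta}{\lambda_\beta^2\lambda_\alpha}y_\beta^2 .
\]
\end{itemize}
Every orbit not of distinct type appears in exactly one group, and the groups involve disjoint variables. So it suffices to show that $Q_A\ge0$ and $Q_{B_\alpha}\ge0$.

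\emph{Step 3: $Q_{B_\alpha}\ge0$.} The form is $\diag(c)-\kappa\,\mathbf 1\mathbf 1^T$ with all $c_\beta>0$. It is positive semidefinite iff $\kappa\sum_\beta c_\beta^{-1}\le1$ (Cauchy--Schwarz). Write $s=\lambda_\alpha$ and $f(t)=t^2/(s+2t)$. The condition reads
\[
 s+\sum_{\beta\ne\alpha}f(\lambda_\beta)\le\frac{\Lambda^2}{2\Lambda-s}.
\]
With $R=\Lambda-s$ one checks the identity $s+f(R)=(s+R)^2/(s+2R)$, which is the case $m=2$ with equality. Since $f$ is convex on $[0,\infty)$ with $f(0)=0$, it is superadditive, so $\sum_{\beta\ne\alpha}f(\lambda_\beta)\le f(R)$. (For $m=1$ the sum is empty and the form vanishes identically.)

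\emph{Step 4: $Q_A\ge0$.} Write $d_\alpha=(\Lambda-2\lambda_\alpha)/(\Lambda\lambda_\alpha^2)$. At most one $d_\alpha$ can be negative, because at most one $\lambda_\alpha$ exceeds $\Lambda/2$.
\begin{itemize}
\item If all $d_\alpha\ge0$, then $Q_A\ge0$ trivially.
\item If $d_1<0$, then $\diag(d)$ has exactly one negative eigenvalue. Since $\det(\diag(d)+vv^T)=\det\diag(d)\,(1+v^T\diag(d)^{-1}v)$ with $v=\Lambda^{-1}\mathbf 1$, positive semidefiniteness is equivalent to $1+v^T\diag(d)^{-1}v\le0$. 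This reads
\[
 \Lambda+\sum_\alpha g(\lambda_\alpha)\le0,\qquad g(t)=\frac{t^2}{\Lambda-2t}.
\]
Again $g$ is convex with $g(0)=0$ on $[0,\Lambda/2)$, so $\sum_{\beta\ge2}g(\lambda_\beta)\le g(R)$ with $R=\Lambda-\lambda_1<\Lambda/2$. The two-term case $\Lambda+g(\lambda_1)+g(R)=0$ is a direct identity.
\item The borderline case $d_1=0$ follows by continuity in $\lambda$.
\end{itemize}

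I expect the main obstacle to be organizing the bookkeeping so that the negative contributions separate exactly into the groups $A$ and $B_\alpha$. The key observation is that the case $m=2$ is an exact identity, and superadditivity then handles general $m$. Once that structure is in place, the two reductions in Steps 3 and 4 are short computations.
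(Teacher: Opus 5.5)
Your proposal is correct and follows the paper's proof essentially step for step: your groups $A$ and $B_\alpha$ are the paper's Groups~1 and~2, and your distinct-index orbits are Groups~3--4; you also use the same Cauchy--Schwarz criterion for $B_\alpha$ and the same superadditive functions $t^2/(s+2t)$ and $t^2/(\Lambda-2t)$, closed off by the exact two-term identity. The differences are cosmetic: you obtain superadditivity from convexity with value $0$ at $0$ rather than from explicit identities, and for group $A$ you use the matrix determinant lemma instead of the paper's Schur complement; this step needs the interlacing fact that $\diag(d)+vv^{\mathsf T}$ keeps at least $m-1$ positive eigenvalues, which you should state, and your borderline case $d_1=0$ is already covered by the case where all $d_\alpha\ge0$.
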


\begin{proof}
The proof is an explicit decomposition of the quadratic form. We normalize the positive eigenvalues by writing
\[
 \mu_\alpha=\frac{\lambda_\alpha}{\Lambda},
 \qquad
 \mu_\alpha>0,
 \qquad
 \sum_{\alpha=1}^m\mu_\alpha=1.
\]
Because $T$ is symmetric, its components fall into disjoint groups according to how many indices are equal to $0$. We verify nonnegativity group by group.

\smallskip
\noindent\emph{Group 1: the components $T_{000}$ and $T_{0\alpha\alpha}$.}
Put
\[
 y_\alpha=T_{0\alpha\alpha},
 \qquad
 z_\alpha=\frac{y_\alpha}{\lambda_\alpha}.
\]
The constraint \eqref{eq:trace-free-tensor} for $k=0$ gives $T_{000}=-\sum_\alpha y_\alpha$. The contribution of these components to the left-hand side of \eqref{eq:tensor-ineq} is
\begin{equation}\label{eq:Q0}
 \mathscr Q_0(z)
 =\left(\sum_\alpha\mu_\alpha z_\alpha\right)^2
 +\sum_\alpha(1-2\mu_\alpha)z_\alpha^2.
\end{equation}
If every $\mu_\alpha\le1/2$, then each term in \eqref{eq:Q0} is nonnegative. Suppose that $\mu_1>1/2$; it is the only such index. Put
\[
 \mathsf D=\diag(1-2\mu_2,\dots,1-2\mu_m),
 \qquad
 \Sigma_\mu=\sum_{\alpha=2}^m\frac{\mu_\alpha^2}{1-2\mu_\alpha}.
\]
The Schur complement of the positive block $\mathsf D+(\mu_\alpha\mu_\beta)_{\alpha,\beta\ge2}$ in the quadratic form \eqref{eq:Q0} is
\begin{equation}\label{eq:Q0-schur}
 1-2\mu_1+\frac{\mu_1^2}{1+\Sigma_\mu}.
\end{equation}
Let $r=1-\mu_1<1/2$. The function
\[
 F_-(t)=\frac{t^2}{1-2t},
 \qquad 0\le t<\frac12,
\]
is superadditive as long as the sum of the arguments is below $1/2$. Indeed,
\[
 F_-(x+y)-F_-(x)-F_-(y)
 =\frac{2xy(1-x-y)}
 {(1-2x)(1-2y)(1-2x-2y)}\ge0.
\]
Iterating this identity and using $\sum_{\alpha\ge2}\mu_\alpha=r$ gives
\[
 \Sigma_\mu\le\frac{r^2}{1-2r}.
\]
Consequently
\[
 1+\Sigma_\mu\le\frac{(1-r)^2}{1-2r}
 =\frac{\mu_1^2}{2\mu_1-1},
\]
and \eqref{eq:Q0-schur} is nonnegative. Thus $\mathscr Q_0\ge0$.

\smallskip
\noindent\emph{Group 2: the components with two zero indices.}
Fix $\alpha\in\{1,\dots,m\}$ and consider $T_{00\alpha}$ together with
\[
 x_\beta=T_{\beta\beta\alpha},
 \qquad 1\le\beta\le m.
\]
The trace constraint for $k=\alpha$ gives
\[
 T_{00\alpha}=-\sum_\beta x_\beta.
\]
After multiplication by the positive factor $\Lambda^2$, the contribution of this group is
\begin{equation}\label{eq:Qalpha}
 \mathscr Q_\alpha(x)
 =\sum_\beta d_{\alpha\beta}x_\beta^2
 -\frac{2-\mu_\alpha}{\mu_\alpha}
 \left(\sum_\beta x_\beta\right)^2,
\end{equation}
where
\begin{equation}\label{eq:dab}
 d_{\alpha\alpha}=\frac1{\mu_\alpha^2},
 \qquad
 d_{\alpha\beta}
 =\frac{\mu_\alpha+2\mu_\beta}
 {\mu_\alpha\mu_\beta^2}
 \quad(\beta\ne\alpha).
\end{equation}
By weighted Cauchy--Schwarz,
\[
 \left(\sum_\beta x_\beta\right)^2
 \le
 \left(\sum_\beta d_{\alpha\beta}x_\beta^2\right)
 \left(\sum_\beta d_{\alpha\beta}^{-1}\right).
\]
It remains to show
\begin{equation}\label{eq:d-inverse}
 \sum_\beta d_{\alpha\beta}^{-1}
 \le\frac{\mu_\alpha}{2-\mu_\alpha}.
\end{equation}
Let $\mu=\mu_\alpha$ and $r=1-\mu$. For fixed $\mu>0$, the function
\[
 F_\mu(t)=\frac{t^2}{\mu+2t}
\]
is superadditive on $[0,\infty)$, because
\[
 F_\mu(x+y)-F_\mu(x)-F_\mu(y)
 =\frac{2\mu xy(\mu+x+y)}
 {(\mu+2x)(\mu+2y)(\mu+2x+2y)}\ge0.
\]
Consequently,
\begin{align*}
 \sum_\beta d_{\alpha\beta}^{-1}
 &=\mu^2+
 \mu\sum_{\beta\ne\alpha}
 \frac{\mu_\beta^2}{\mu+2\mu_\beta}\\
 &\le \mu^2+\mu\frac{r^2}{\mu+2r}
 =\frac{\mu}{2-\mu},
\end{align*}
which proves \eqref{eq:d-inverse} and hence $\mathscr Q_\alpha\ge0$.

\smallskip
\noindent\emph{Group 3: one zero index and two distinct positive indices.}
If $\alpha\ne\beta$, the coefficient of $T_{0\alpha\beta}^2$ is
\begin{equation}\label{eq:mixed-coeff}
 2\left(
 \frac1{\lambda_\alpha\lambda_\beta}
 -\frac1{\Lambda\lambda_\alpha}
 -\frac1{\Lambda\lambda_\beta}
 \right)
 =\frac{2(\Lambda-\lambda_\alpha-\lambda_\beta)}
 {\Lambda\lambda_\alpha\lambda_\beta}
 \ge0.
\end{equation}
\smallskip
\noindent\emph{Group 4: three positive indices.}
Every remaining component has three indices in $\{1,\dots,m\}$, so its coefficient is positive. The four groups are disjoint and exhaust all components of the symmetric tensor. Their contributions are all nonnegative, and their sum is exactly the left-hand side of \eqref{eq:tensor-ineq}. This proves the lemma.
\end{proof}

\subsection{Superharmonicity of the logarithmic Hessian determinant}

\begin{theorem}[Gleason–Wolff logarithmic determinant inequality]\label{thm:det-min}
Let $\cO\subset\R^N$ be a domain and let $q\in C^4(\cO)$ be harmonic. Suppose that
\[
 n_-(D^2q)=1,
 \qquad n_0(D^2q)=0
 \quad\text{in }\cO.
\]
Then
\begin{equation}\label{eq:logdet-super}
 \Delta\log|\det D^2q|\le0
 \qquad\text{in }\cO.
\end{equation}
Consequently, if $\cO_0\Subset\cO$ is a bounded domain with piecewise $C^1$ boundary, then
\begin{equation}\label{eq:det-minimum}
 \inf_{\cO_0}|\det D^2q|
 \ge
 \inf_{\partial\cO_0}|\det D^2q|.
\end{equation}
\end{theorem}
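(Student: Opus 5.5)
The plan is a direct Bochner-type computation of $\Delta\log|\det D^2q|$, reduced pointwise to \cref{lem:tensor}. Write $H=D^2q$. Since $n_0(H)=0$ in $\cO$, the function $\log|\det H|$ is $C^2$ there. Jacobi's formula gives $\partial_k\log|\det H|=\tr(H^{-1}\partial_kH)$. Differentiating once more and using $\partial_k(H^{-1})=-H^{-1}(\partial_kH)H^{-1}$ yields
\[
 \Delta\log|\det H|
 =\tr\bigl(H^{-1}\Delta H\bigr)
 -\sum_{k=1}^N\tr\bigl(H^{-1}(\partial_kH)H^{-1}(\partial_kH)\bigr).
\]
Because $q$ is harmonic and $C^4$, we have $\Delta H=D^2(\Delta q)=0$, so the first term vanishes. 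It therefore suffices to show that the sum $\sum_k\tr(H^{-1}H_kH^{-1}H_k)$ is nonnegative at each point.

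Fix a point of $\cO$ and rotate coordinates so that $H=\diag(\lambda_0,\lambda_1,\dots,\lambda_m)$ with $m=N-1$. By the assumption $n_-(H)=1$, $n_0(H)=0$, we may take $\lambda_0<0$ and $\lambda_1,\dots,\lambda_m>0$. Harmonicity gives $\tr H=0$, hence $\lambda_0=-\Lambda$ with $\Lambda=\sum_{\alpha\ge1}\lambda_\alpha$, exactly the normalization of \cref{lem:tensor}.

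Set $T_{ijk}=q_{ijk}$ in these rotated coordinates. This tensor is completely symmetric since $q\in C^3$. It is also trace-free in the sense of \eqref{eq:trace-free-tensor}, because
\[
 \sum_iT_{iik}=\partial_k\Delta q=0 .
\]
In the eigenbasis,
\[
 \tr\bigl(H^{-1}H_kH^{-1}H_k\bigr)=\sum_{i,j}\frac{T_{ijk}^2}{\lambda_i\lambda_j}.
\]
Summing over $k$ gives precisely the left side of \eqref{eq:tensor-ineq}, which is $\ge0$ by \cref{lem:tensor}. Hence $\Delta\log|\det H|\le0$, which is \eqref{eq:logdet-super}. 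The only point needing care is the identification of the rotated-coordinate third derivatives with a tensor satisfying both hypotheses of the lemma. Rotation invariance of the Laplacian and of $\tr(H^{-1}H_kH^{-1}H_k)$ summed over $k$ handles this.

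For \eqref{eq:det-minimum}, observe that $\overline{\cO_0}\subset\cO$ is compact and $\det H$ is continuous and nonvanishing on it. Thus $\log|\det H|$ is a continuous superharmonic function on $\overline{\cO_0}$. The classical minimum principle for superharmonic functions on the bounded domain $\cO_0$ says its infimum over $\cO_0$ is attained on $\partial\cO_0$; the piecewise $C^1$ regularity of the boundary is not actually needed for this. Exponentiating gives \eqref{eq:det-minimum}.

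I expect no real obstacle: the analytic content has been isolated in \cref{lem:tensor}. The only thing to double-check is the sign convention, namely that the single negative eigenvalue is forced by $\tr H=0$ to equal $-\Lambda$.
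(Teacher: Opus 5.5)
Your proposal is correct and matches the paper's proof step for step. Both use Jacobi's formula and kill the $\tr(H^{-1}\Delta H)$ term by harmonicity. Both then diagonalize $H$ with $\lambda_0=-\Lambda$ forced by $\tr H=0$, reduce to the trace-free tensor inequality of \cref{lem:tensor}, and finish with the minimum principle. Your side remark is also right: piecewise $C^1$ regularity of $\partial\cO_0$ plays no role in the weak minimum principle.
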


\begin{proof}
Fix a point of $\cO$ and choose an orthonormal basis in which $H=D^2q$ is diagonal. Its eigenvalues can be written as
\[
 -\Lambda,\lambda_1,\dots,\lambda_{N-1},
 \qquad
 \lambda_\alpha>0.
\]
Since $q$ is harmonic,
\[
 \Lambda=\sum_{\alpha=1}^{N-1}\lambda_\alpha.
\]
Let $T_{ijk}=q_{ijk}$. Let $H_k=\partial_kH$ and $H_{kk}=\partial_{kk}H$. The standard derivative formula
\[
 \partial_k\log|\det H|=\tr(H^{-1}H_k)
\]
can be differentiated once more. Summing over $k$ gives
\begin{align*}
 \Delta\log|\det H|
 &=\sum_k\left
 \{-\tr(H^{-1}H_kH^{-1}H_k)
 +\tr(H^{-1}H_{kk})\right\}.
\end{align*}
The term containing fourth derivatives vanishes after the sum in $k$, because harmonicity commutes with differentiation:
\[
 \sum_k(H_{kk})_{ij}
 =\partial_{ij}\Delta q=0.
\]
Thus, in the eigenbasis of $H$,
\begin{equation}\label{eq:lap-logdet}
 \Delta\log|\det H|
 =-\sum_{i,j,k}\frac{T_{ijk}^2}{\lambda_i\lambda_j},
\end{equation}
where $\lambda_0=-\Lambda$. Moreover,
\[
 \sum_iT_{iik}=\partial_k\Delta q=0.
\]
The third derivatives form a completely symmetric tensor, and the last displayed trace identity is precisely the trace-free condition in \cref{lem:tensor}. That lemma says that the sum in \eqref{eq:lap-logdet} is nonnegative; the preceding minus sign therefore proves \eqref{eq:logdet-super}.

Since $\overline{\cO_0}\Subset\cO$, the function $\log|\det D^2q|$ is continuous on $\overline{\cO_0}$. The weak minimum principle for superharmonic functions gives
\[
 \inf_{\cO_0}\log|\det D^2q|
 \ge \inf_{\partial\cO_0}\log|\det D^2q|.
\]
Exponentiation proves \eqref{eq:det-minimum}.
\end{proof}

\subsection{The Gleason--Wolff zero-set theorem}

\begin{theorem}[Gleason--Wolff zero-set theorem]\label{thm:GW}
Let $\cO\subset\R^N$, $N\ge3$, be a  domain, and let $q$ be harmonic in $\cO$. Suppose that $D^2q$ has at most one negative eigenvalue at every point of $\cO$. If
\[
 \det D^2q(X_0)=0
\]
for some $X_0\in\cO$, then
\[
 \det D^2q\equiv0
 \qquad\text{in }\cO.
\]
\end{theorem}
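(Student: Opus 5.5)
The plan is a strong minimum principle for a small positive power of $|\det D^2q|$. Write $f=|\det D^2q|$ and $Z=\{f=0\}\ni X_0$; $f$ is continuous, $Z$ is relatively closed, and $\cO$ is connected. I will show that $g=f^{\delta}$, for some $\delta=\delta(N)>0$, satisfies the sub-mean-value inequality for $-g$ on all of $\cO$, and then apply the strong maximum principle.

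\emph{Step 1: structure off $Z$.} At a point of $\cO\setminus Z$ the Hessian is nonsingular and trace-free, since $\tr D^2q=\Delta q=0$. It cannot be positive semidefinite, because a trace-free positive semidefinite matrix is zero and hence singular. Together with the hypothesis $n_-\le1$, this forces $n_-(D^2q)=1$ and $n_0(D^2q)=0$ on the open set $\cO\setminus Z$. So \cref{thm:det-min} applies there, and in an eigenbasis the computation \eqref{eq:lap-logdet} holds with eigenvalues $-\Lambda,\lambda_1,\dots,\lambda_{N-1}$ and $T_{ijk}=q_{ijk}$.

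\emph{Step 2: a quantitative inequality (main obstacle).} I want to strengthen \cref{lem:tensor} to
\[
 \sum_{i,j,k}\frac{T_{ijk}^2}{\lambda_i\lambda_j}\;\ge\;\delta\sum_k\Bigl(\sum_i\frac{T_{iik}}{\lambda_i}\Bigr)^2
\]
for trace-free symmetric $T$, with $\delta=\delta(N)>0$. The right-hand side is $\delta\,|\nabla\log f|^2$, because $\partial_k\log f=\tr(H^{-1}H_k)=\sum_iT_{iik}/\lambda_i$.

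By \eqref{eq:lap-logdet}, this gives $\Delta\log f\le-\delta|\nabla\log f|^2$ on $\cO\setminus Z$. Hence
\[
 \Delta g=\delta g\bigl(\Delta\log f+\delta|\nabla\log f|^2\bigr)\le0,
\]
so $g$ is superharmonic there.

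To prove the strengthened inequality I would revisit the group decomposition in \cref{lem:tensor}. The gradient involves only the components $T_{0\alpha\alpha}$, $T_{000}$ (Group 1) and $T_{\beta\beta\alpha}$, $T_{00\alpha}$ (Group 2).
\begin{itemize}
 \item In Group 2, I would not spend the full Cauchy--Schwarz margin. I would show $\sum_\beta d_{\alpha\beta}^{-1}\le(1-\delta')\mu_\alpha/(2-\mu_\alpha)$ is \emph{not} needed. Instead, I would show that $\mathscr Q_\alpha(x)$ dominates a multiple of $\bigl(\sum_\beta x_\beta/\mu_\beta\bigr)^2$ by splitting off a fraction of each diagonal weight $d_{\alpha\beta}$.
 \item In Group 1, I would similarly keep a fraction of the nonnegative terms of $\mathscr Q_0$. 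The delicate case is $\mu_1>1/2$, where the Schur complement \eqref{eq:Q0-schur} can vanish. If needed, I would first try to prove the inequality only with $\delta$ depending on $N$, using compactness of the normalized simplex of $\mu$'s. I would check that the kernel of $\mathscr Q_0$ on the boundary case is orthogonal to the gradient functional $z\mapsto\sum_\alpha z_\alpha-\bigl(\sum\mu_\alpha z_\alpha\bigr)\cdot(\text{const})$.
\end{itemize}
This algebraic step is where I expect the real difficulty, since it is exactly what makes Gleason--Wolff's theorem nontrivial.

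\emph{Step 3: conclusion.} Set $h=-g$. Then $h$ is continuous on $\cO$, $h\le0$, $h=0$ on $Z$, and $h$ is subharmonic on $\cO\setminus Z$. At a point $P\in Z$, the sub-mean-value inequality holds trivially, since every average of $h$ is $\le0=h(P)$. Hence $h$ is subharmonic on $\cO$.

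Now $h$ attains its maximum $0$ at the interior point $X_0$. By the strong maximum principle on the domain $\cO$, $h\equiv0$, that is, $\det D^2q\equiv0$ in $\cO$.
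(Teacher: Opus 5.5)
Your Step~3 has a sign error that is fatal on its own, independently of Step~2. For $h=-g\le0$ and $P\in Z$, the inequality that holds trivially is $h(P)=0\ge$ (mean of $h$ over spheres about $P$). That is the \emph{super}-mean-value inequality. The sub-mean-value inequality $0\le$ (mean of $h$) would force $h\equiv0$ on small spheres about $P$, so it is the local conclusion itself.

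The principle you rely on is false. The function $h=-|x_1|$ is continuous, nonpositive, and harmonic off its zero set $\{x_1=0\}$. It attains the interior maximum $0$ and is not identically zero. Hence superharmonicity of any power of $|\det D^2q|$ on $\cO\setminus Z$ cannot by itself propagate a single zero. You need genuinely new information about $Z$, for instance that it is polar, so that bounded superharmonic functions extend across it. Alternatively you need a differential inequality valid on a full neighbourhood of $X_0$, as in constant-rank arguments. Supplying this is the real content of the Gleason--Wolff theorem. The paper does not reprove it; it invokes \cite[Theorem~1]{GleasonWolff1991} for $q$ or $-q$.

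The strengthened inequality of Step~2 is also false. For $N=3$, the form in \eqref{eq:Q0} reduces to $\mathscr Q_0(z)=(\mu_2z_1+\mu_1z_2)^2$. This form has a kernel, and on it $\partial_0\log f=\sum_\alpha(1+\mu_\alpha)z_\alpha$ is nonzero unless $\mu_1=\mu_2$. Explicitly, take $0<\lambda_1\ne\lambda_2$ and coordinates $w=(w_0,w_1,w_2)$, and set
\[
 q(w)=\tfrac12\bigl(-(\lambda_1+\lambda_2)w_0^2+\lambda_1w_1^2+\lambda_2w_2^2\bigr)
 +\tfrac16(\lambda_2^2-\lambda_1^2)w_0^3
 +\tfrac12\lambda_1^2w_0w_1^2-\tfrac12\lambda_2^2w_0w_2^2 .
\]
This $q$ is harmonic, with $n_-(D^2q)=1$ and $n_0(D^2q)=0$ near $0$. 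At $w=0$, formula \eqref{eq:lap-logdet} gives $\Delta\log f=0$, while $\nabla\log f=(2(\lambda_1-\lambda_2),0,0)\ne0$. Thus $\Delta f^\delta=\delta^2f^\delta|\nabla\log f|^2>0$ there for every $\delta>0$.

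For $N\ge4$, use the same cubic and add extra positive eigenvalues tending to $0$. The ratio of the two sides of your inequality then tends to $0$, so no $\delta(N)$ exists. Your appeal to compactness fails here: the open simplex of $\mu$'s is not compact, and $\mathscr Q_0$ degenerates at its boundary.
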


This is the sign-symmetric formulation of \cite[Theorem~1]{GleasonWolff1991}, obtained by applying that theorem to $q$ or $-q$. It is qualitatively different from the logarithmic determinant inequality: it allows the Hessian to be singular a priori and says that one zero forces global vanishing on the connected component. We use only the following contrapositive form.

\begin{corollary}[Nonvanishing of the Hessian determinant]\label{cor:GW-propagation}
Let $q$ be harmonic in a  domain $\cO\subset\R^N$, $N\ge3$, and suppose that $D^2q$ has at most one negative eigenvalue everywhere. If $\det D^2q$ is nonzero at one point of $\cO$, then it is nonzero at every point of $\cO$.
\end{corollary}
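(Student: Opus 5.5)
This corollary is the contrapositive of \cref{thm:GW}, so the plan is essentially a one-line logical inversion; I will only add a check that the hypotheses match. Let $q$ be harmonic in the domain $\cO\subset\R^N$, $N\ge3$, and assume that $D^2q$ has at most one negative eigenvalue at every point of $\cO$. Suppose that $\det D^2q(X_1)\neq0$ for some $X_1\in\cO$. I argue by contradiction and assume that there is a point $X_0\in\cO$ with $\det D^2q(X_0)=0$.

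The hypotheses of \cref{thm:GW} are satisfied exactly as stated. The set $\cO$ is a domain, so it is open and connected. The function $q$ is harmonic there, and the bound $n_-(D^2q)\le1$ holds pointwise in $\cO$. Applying \cref{thm:GW} with this point $X_0$ gives $\det D^2q\equiv0$ in $\cO$. In particular $\det D^2q(X_1)=0$, which contradicts the choice of $X_1$. Hence $\det D^2q$ vanishes nowhere in $\cO$.

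The only point needing care is connectedness. The conclusion of \cref{thm:GW} propagates a zero only within a connected component, and the word \emph{domain} in both statements supplies exactly this. When the corollary is later applied to the slit domain $\cG_D=\R^N\setminus(D^c\times\{0\})$, one must note that this set is connected. Indeed, the upper and lower half-spaces are each connected, and they are joined through the window $D\times\{0\}$. Smoothness of $q$ is automatic from harmonicity, so no regularity issue arises. I expect no substantive obstacle here, since the whole content lies in \cref{thm:GW}, which is taken from \cite{GleasonWolff1991}.
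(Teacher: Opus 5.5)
Your proposal is correct and matches the paper's proof, which is the same one-line contrapositive of \cref{thm:GW}: a zero anywhere would force $\det D^2q\equiv0$ on the domain, contradicting the given nonsingular point. Your connectedness remark about $\cG_D$ also agrees with how the paper later justifies applying the corollary on the slit domain.
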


\begin{proof}
A zero at any point would force the determinant to vanish identically by \cref{thm:GW}, contradicting the given nonsingular point.
\end{proof}

\begin{remark}
The logarithmic determinant inequality and the zero-set theorem have distinct roles. The first gives a quantitative lower bound while the perturbed Hessian is nonsingular; the second excludes zero eigenvalues after passage to a limit, when only the closed condition $n_+(D^2W)\le1$ remains.
\end{remark}

\section{Slit-edge asymptotics and Hessian inertia}\label{sec:edge}

\subsection{Admissible deformations of smooth uniformly convex domains}

We first assume that $D$ has $C^\infty$ boundary and strictly positive principal curvatures. The method of continuity uses the following class of domain paths. A family $\{D_\tau\}_{\tau\in[0,1]}$ is called \emph{admissible} if:

\begin{enumerate}[label=\textnormal{(F\arabic*)},leftmargin=2.7em]
\item every $D_\tau$ is bounded with $C^\infty$ boundary;
\item there exist balls $B_-\Subset B_+$ such that $B_-\subset D_\tau\subset B_+$ for all $\tau\in[0,1]$;
\item if $\nu_{\mathrm{in},\tau}$ denotes the inward unit normal, then there exist constants $0<\kappa_0\le\kappa_1<\infty$, independent of $\tau$, such that the shape operators
\[
 S_\tau=-D\nu_{\mathrm{in},\tau}\big|_{T\partial D_\tau}
\]
satisfy
\[
 \kappa_0I\le S_\tau\le\kappa_1I;
\]
\item for some fixed $\alpha\in(0,1)$, the local boundary charts and their inverses have a common radius, are uniformly bounded in $C^{6,\alpha}$, and depend continuously on $\tau$ in that topology;
\item $D_0=B_1$ and $D_1=D$.
\end{enumerate}

Such a family exists. By \eqref{eq:similarity-scaling}, translation and dilation of the endpoint domain do not affect the conclusion. Choose the origin in $D$ and let
\[
 h_D(\omega)=\sup_{x\in D}x\cdot\omega,
 \qquad \omega\in\Sn^{n-1},
\]
be its support function. Define
\[
 h_\tau=(1-\tau)h_{B_1}+\tau h_D.
\]
For a smooth strictly convex body, the tensor
\[
 \nabla^2_{\Sn^{n-1}}h+h g_{\Sn^{n-1}},
\]
where $g_{\Sn^{n-1}}$ is the standard metric, is the radius-of-curvature tensor. Along the path $h_\tau$, this tensor is a convex combination of two positive definite tensors. Its eigenvalues therefore admit positive upper and lower bounds on $[0,1]\times\Sn^{n-1}$. Smooth dependence of $h_\tau$ on $\tau$ gives the chart bounds in (F4), so the convex bodies with support functions $h_\tau$ form an admissible family.

\subsection{Parameter-uniform slit-edge expansion}

Fix $\tau$ and a point of $\partial D_\tau$. Let $z=(z_1,\dots,z_{n-1})$ be local coordinates on $\partial D_\tau$, let $\gamma_\tau$ be the corresponding parametrization, and let $\nu_{\mathrm{in},\tau}$ be the inward unit normal. Denote by $d_\tau$ the signed distance to $\partial D_\tau$, positive in $D_\tau$. The associated Fermi coordinates are
\begin{equation}\label{eq:fermi}
 x=\gamma_\tau(z)+s\nu_{\mathrm{in},\tau}(z),
 \qquad
 s>0\text{ in }D_\tau,
 \quad s<0\text{ in }D_\tau^c.
\end{equation}
For $|s|$ below the common tubular radius, $s=d_\tau(x)$. The Euclidean distance from $(x,y)$ to $\partial D_\tau\times\{0\}$ is
\[
 \rho=(s^2+y^2)^{1/2}.
\]
Set $D_{s,y}=(\partial_s,\partial_y)$ and
\[
 \theta=\arg(s+iy)\in(-\pi,\pi),
 \qquad
 h(s,y)=\rho^{1/2}\cos\frac\theta2
 =\Re(s+iy)^{1/2}.
\]
Thus the slit is represented in each two-dimensional normal plane by the negative $s$-axis $\{s\le0,y=0\}$. The function $h$ is harmonic away from this axis, vanishes on the two faces of the slit, is positive for $s>0$, and is homogeneous of degree $1/2$. It is the leading homogeneous Dirichlet singular function.

\begin{proposition}\label{prop:uniform-edge}
Let $\{D_\tau\}_{\tau\in[0,1]}$ be an admissible family. There exist constants $\rho_0>0$, $C<\infty$, and $a_0>0$, depending only on the uniform geometric data in (F1)--(F4), such that, in every Fermi chart,
\begin{equation}\label{eq:edge-expansion}
 W_{D_\tau}(z,s,y)=a_\tau(z)h(s,y)+R_\tau(z,s,y),
 \qquad 0<\rho<\rho_0,
\end{equation}
where
\begin{equation}\label{eq:edge-coefficient}
 \|a_\tau\|_{C^2(\partial D_\tau)}\le C,
 \qquad
 a_\tau(z)\ge a_0,
\end{equation}
and
\begin{align}
 |D_z^\beta R_\tau|&\le C\rho^{3/2},
 &&|\beta|\le2, \label{eq:edge-R-tt}\\
 |D_{s,y}R_\tau|+|D_zD_{s,y}R_\tau|&\le C\rho^{1/2},
 \label{eq:edge-R-tn}\\
 |D^2_{s,y}R_\tau|&\le C\rho^{-1/2}.
 \label{eq:edge-R-nn}
\end{align}
Derivatives at $y=0$, $s<0$, are understood as one-sided derivatives on each component of the slit domain. The estimates are uniform in the chart, in the edge point, and in $\tau$.
\end{proposition}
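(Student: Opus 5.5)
We reduce \cref{prop:uniform-edge} to the quantitative boundary Harnack/Schauder theory of De Silva--Savin for slit domains \cite{DeSilvaSavin2015}, applied to the harmonic function $W_{D_\tau}$ in $\cG_{D_\tau}$, which vanishes on the slit $D_\tau^c\times\{0\}$ by \eqref{eq:mixed-boundary} and \eqref{eq:W}. The first step is a set of uniform global bounds. By \cref{lem:variational} and (F2), $0\le u_{B_-}\le u_{D_\tau}\le u_{B_+}$, and the explicit ball torsion functions (multiples of $(r^2-|x|^2)_+^{1/2}$) give a uniform $L^\infty$ bound. Hence, by \eqref{eq:poisson-extension}, $W_{D_\tau}$ is bounded uniformly on $B_R\times[-R,R]$ for every fixed $R$. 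For the lower bound we use the same comparison. Each boundary point of $D_\tau$ admits an interior tangent ball of radius $\ge \kappa_1^{-1}$ by (F3), with uniform charts by (F4). Comparing with the translated ball torsion function gives $W_{D_\tau}\ge u_{D_\tau}\ge c\,d_\tau^{1/2}$ near $\partial D_\tau$. Together with Hopf-type positivity of the extension, this yields $W_{D_\tau}\ge c\,h$ on a uniform neighborhood of the edge.

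The second step is the expansion itself. In a Fermi chart of radius $\rho_0$ around an edge point, the slit $\{s\le 0,y=0\}$ is a $C^{6,\alpha}$ codimension-two-edged hypersurface with uniform bounds by (F4). The De Silva--Savin higher-order boundary Harnack estimate then states the following: for a harmonic $W$ vanishing on the slit, the quotient $W/h_\tau$, where $h_\tau$ is the flattened singular function adapted to the slit, is $C^{k,\alpha}$ up to the slit from each side, with norm controlled by $\|W\|_{L^\infty}$ and the chart norms. Set $a_\tau(z)=(W/h)(z,0,0)$. The $C^2$ bound in \eqref{eq:edge-coefficient} follows from this quotient regularity (we need roughly $k=3$, which is why (F4) asks for $C^{6,\alpha}$). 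The lower bound $a_\tau\ge a_0$ follows from the comparison in the first step. The remainder is $R_\tau=h\,(W/h-a_\tau(z))$ plus curvature corrections from replacing $h_\tau$ by the Fermi-coordinate $h$. Since $W/h-a_\tau=O(\rho)$ with derivatives controlled, the tangential estimate \eqref{eq:edge-R-tt} follows, because $z$-derivatives do not hit $h$.

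For the normal derivatives \eqref{eq:edge-R-tn}--\eqref{eq:edge-R-nn} we use rescaling rather than differentiating the quotient expansion directly. Fix $X$ with $\rho(X)=r$ and consider the ball of radius $r/2$ around $X$, rescaled to unit size. If this ball misses the slit, interior estimates for the harmonic function $R_\tau$ apply. That function is harmonic up to an error $\Delta(a_\tau h)=O(\rho^{-1/2})$, which comes from the Laplacian in Fermi coordinates acting on $h$ and from $\Delta_z a_\tau$. If the ball touches a slit face, flat Dirichlet boundary Schauder estimates apply on each side. Either way, the bound $|R_\tau|\le C r^{3/2}$ on the doubled ball, together with the $O(r^{-1/2})$ source, gives $|D_{s,y}R_\tau|\le Cr^{1/2}$ and $|D^2_{s,y}R_\tau|\le Cr^{-1/2}$. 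The mixed estimate $D_zD_{s,y}R_\tau$ follows the same way applied to $D_zR_\tau$, using \eqref{eq:edge-R-tt}. All constants depend only on the chart norms, so they are uniform in the chart, the edge point and $\tau$.

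The main obstacle is the second step. One has to pin down exactly which De Silva--Savin statement gives $C^{2}$-in-$z$ control of the quotient with $O(\rho)$ remainder, and to handle the mismatch between the curved-slit singular function $h_\tau$ and the model $h(s,y)$. I would first try writing $h_\tau=h(1+O(\rho))$ with smooth correction in Fermi coordinates, and absorb the difference into $R_\tau$.
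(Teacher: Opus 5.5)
Your proposal is correct and takes essentially the paper's route. The ingredients match: a uniform $L^\infty$ bound from domain monotonicity and the ball torsion function; the De Silva--Savin slit expansion for the tangential bounds on $R_\tau=W_{D_\tau}-a_\tau h$; scaled interior and flat-Dirichlet Schauder estimates applied to $LR_\tau=-L(a_\tau h)=O(\rho^{-1/2})$ and to its tangential derivative, giving \eqref{eq:edge-R-tn}--\eqref{eq:edge-R-nn}; and the interior tangent-ball comparison for $a_0$.

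The paper settles the points you flag as follows. The De Silva--Savin profile $U_0=2^{-1/2}(d+r)^{1/2}$ coincides exactly with $h$ in Fermi coordinates, so there is no curvature correction to absorb. The tangential control comes from their differentiated tangent-polynomial estimates (Remark~3.2 and Proposition~6.2), not from classical $C^{k,\alpha}$ regularity of $W/h$ in $(x,y)$. That classical regularity actually fails at the edge: for example, $\Re(s+iy)^{3/2}/\Re(s+iy)^{1/2}=2s-\rho$. This is harmless for you, since you only differentiate the quotient in $z$. Finally, continuity of $W_{D_\tau}$ up to the slit, which De Silva--Savin require, is obtained from $u_{D_\tau}\in C^{1/2}(\R^n)$ (Ros-Oton--Serra) rather than from the trace-sense condition \eqref{eq:mixed-boundary}.
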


\begin{proof}
Fix the exponent $\alpha$ in (F4). We use the following local consequence of the Schauder theory in \cite{DeSilvaSavin2015}. Let $\Gamma\subset\R^n$ be a $C^{6,\alpha}$ hypersurface and set
\[
 \mathcal P=\{(x,y):y=0,\ d(x)\le0\},
 \qquad
 r=(d(x)^2+y^2)^{1/2},
 \qquad
 U_0=2^{-1/2}(d+r)^{1/2},
\]
where $d$ is the signed distance to $\Gamma$, with $d>0$ on the side complementary to $\mathcal P$. Suppose that $v\in C(B_1)$ is bounded and even in $y$, satisfies $\Delta v=0$ in $B_1\setminus\mathcal P$, and vanishes on $\mathcal P$. After the normalization of the edge graph used in \cite{DeSilvaSavin2015}, Theorem~3.3 there, with $k=4$, yields
\begin{equation}\label{eq:DS-expansion}
 v=U_0\left(\sum_{m=0}^{5}A_m(x)r^m+O(r^{5+\alpha})\right).
\end{equation}
Here $A_m\in C^{5-m,\alpha}$, and the term that is odd in $y$ in the general statement vanishes because $v$ is even. The coefficient norms and the pointwise remainder are bounded in terms of $n$, $\alpha$, $\|v\|_{L^\infty(B_1)}$, and the normalized $C^{6,\alpha}$ norm of $\Gamma$. We do not differentiate the pointwise $O$-term in \eqref{eq:DS-expansion}. Estimates for derivatives in the thin variables $x$ follow instead from the compatible tangent-polynomial estimates in \cite[Remark~3.2 and Proposition~6.2]{DeSilvaSavin2015}. More precisely, in the notation of that proposition, for each edge point $Z$ and each thin-space multi-index $\mu$ with $1\le|\mu|\le2$,
\begin{equation}\label{eq:DS-differentiated}
 D_x^\mu v(X)
 =\frac{U_0(X)}{r(X)^{2|\mu|-1}}
 \left(P_Z^\mu(x-Z,r(X))
 +O\bigl(|X-\overline Z|^{4+|\mu|+\alpha}\bigr)\right),
 \qquad \overline Z=(Z,0)\in\R^{n+1}.
\end{equation}
The polynomial $P_Z^\mu$ is obtained by formally differentiating the tangent polynomial for $v$ at $Z$; its monomial degrees range from $|\mu|-1$ to $4+|\mu|$. Its coefficient bounds and the constant in the remainder are uniform in $Z$ and have the same quantitative dependence as in \eqref{eq:DS-expansion}. The estimate holds first in the non-tangential region based at $\overline Z$ and, by the final localization step in Proposition~6.2, throughout a smaller slit neighborhood. Proposition~6.2 concerns only the thin variables $x$ and does not estimate derivatives in the extension variable $y$; those derivatives are obtained below from local Schauder estimates.

Write $x=\gamma(z)+s\nu(z)$ and define
\[
 a(z)=A_0(\gamma(z))
 =\lim_{s\downarrow0}\frac{v(z,s,0)}{\sqrt{s}}.
\]
The limit is intrinsic and hence defines the same function on overlapping charts. Since $A_0\in C^{5,\alpha}$, the function $a$ has a uniform $C^{5,\alpha}$ bound. Set $R=v-a(z)U_0$. Since
\[
 A_0(\gamma(z)+s\nu(z))-a(z)=sB(z,s)
\]
with two uniformly bounded tangential derivatives, let $\pi(x)$ denote nearest-point projection onto $\Gamma$ and define
\[
 E_4:=R-U_0\bigl\{A_0(x)-A_0(\pi(x))+rA_1(x)
       +r^2A_2(x)+r^3A_3(x)\bigr\}.
\]
The pointwise expansion \eqref{eq:DS-expansion} identifies this remainder as
\[
 E_4=U_0\{r^4A_4(x)+r^5A_5(x)\}+E,
 \qquad |E|\le CU_0r^{5+\alpha}.
\]
This identity is used only pointwise. The coefficient functions $A_0,\ldots,A_3$ have at least two classical derivatives; no classical derivative of $A_4$, $A_5$, or $E$ is taken. Instead, apply \eqref{eq:DS-differentiated} to $v$ at the nearest edge point and subtract the derivatives of the displayed $A_0,\ldots,A_3$ terms. The Fermi vector fields have uniformly bounded $C^{4,\alpha}$ coefficients, so conversion from Euclidean thin derivatives to $D_z$ and $\partial_s$ produces only lower-order terms of the same edge order. The compatibility of the tangent polynomials on overlapping non-tangential regions, including the cancellation of the tangential derivatives of $a$, then gives
\begin{equation}\label{eq:horizontal-R-estimates}
 |D_z^\beta\partial_s^jR|
 \le Cr^{3/2-j}
 \quad\text{if }|\beta|+j\le2.
\end{equation}
The $C^{5,\alpha}$ bound for $a$ is used below when taking one tangential derivative, although only the $C^2$ bound is recorded in the statement. The constants have the same quantitative dependence as those in \eqref{eq:DS-expansion}.

Extend $a$ constantly along the normal segments and denote the parallel hypersurfaces by $\Sigma_s=\{d=s\}$. In Fermi coordinates the Euclidean Laplacian has the form
\begin{equation}\label{eq:Fermi-Laplacian}
 L=\partial_s^2+\partial_y^2+b(z,s)\partial_s
   +\Delta_{\Sigma_s},
\end{equation}
where $\Delta_{\Sigma_s}$ is the Laplace--Beltrami operator of the induced metric on $\Sigma_s$, and $b$ and the coefficients of $\Delta_{\Sigma_s}$ have uniform $C^{3,\alpha}$ bounds. Since $Lv=0$,
\begin{equation}\label{eq:R-equation}
 LR=-L(aU_0).
\end{equation}
The principal normal part cancels because
\[
 (\partial_s^2+\partial_y^2)U_0=0.
\]
Since $U_0$ is independent of $z$ in Fermi coordinates and $a$ is independent of $s$,
\begin{equation}\label{eq:aU0-source}
 L(aU_0)=ab\,\partial_sU_0+U_0\Delta_{\Sigma_s}a.
\end{equation}
Fix a point $X$ whose distance from the edge is $r$. There is a constant $c>0$, depending only on the uniform geometric data, such that one of the following alternatives holds. Either a full coordinate ball of radius $cr$ centered at $X$ does not meet the exterior Dirichlet portion, or $X$ lies on the exterior side and is contained in a half-ball of radius $cr$, centered at the projection of $X$ onto $\{y=0\}$, whose closure is disjoint from the edge. Denote the applicable full ball or half-ball by $\mathscr B_{cr}(X)$. On this coordinate neighborhood, the right-hand side of \eqref{eq:R-equation} satisfies
\[
 \|L(aU_0)\|_{L^\infty(\mathscr B_{cr}(X))}
 +r^\alpha[L(aU_0)]_{C^\alpha(\mathscr B_{cr}(X))}
 \le Cr^{-1/2};
\]
this follows from
\[
 |D_{s,y}^jU_0|\le Cr^{1/2-j},
 \qquad j=0,1,2,
\]
together with the uniform $C^{5,\alpha}$ bound for $a$. In the full-ball case the interior Schauder estimate applies. In the half-ball case $R=0$ on the flat boundary portion, so the boundary Schauder estimate for the Dirichlet problem applies. When $s>0$, both $v$ and $R$ extend smoothly across $y=0$, and the point is covered by the full-ball case.

After dilation by $r$, \eqref{eq:horizontal-R-estimates}, \eqref{eq:R-equation}, and these Schauder estimates yield
\begin{equation}\label{eq:y-R-estimates}
 |D_{s,y}R|\le Cr^{1/2},
 \qquad
 |D^2_{s,y}R|\le Cr^{-1/2}.
\end{equation}
Let $\mathscr B'$ denote the concentric ball or half-ball of radius $cr/2$. On $\mathscr B'$, the same argument gives the full scale-invariant estimate
\[
 \|R\|_{L^\infty(\mathscr B')}+r\|DR\|_{L^\infty(\mathscr B')}
 +r^2\|D^2R\|_{L^\infty(\mathscr B')}
 +r^{2+\alpha}[D^2R]_{C^\alpha(\mathscr B')}
 \le Cr^{3/2}.
\]
Let $T$ be a Fermi tangential vector field. Differentiating \eqref{eq:R-equation} gives
\[
 L(TR)=-T L(aU_0)-[T,L]R.
\]
The commutator $[T,L]R$ is a linear combination of first and second derivatives of $R$ with coefficients having uniform $C^{1,\alpha}$ bounds. Estimate \eqref{eq:horizontal-R-estimates}, the full scaled $C^{2,\alpha}$ estimate above, and the coefficient bounds supplied by the choice $k=4$ show that the right-hand side has scale-invariant $C^\alpha$ norm bounded by $Cr^{-1/2}$. Moreover, $|TR|\le Cr^{3/2}$, and $TR=0$ on the exterior Dirichlet portion. Applying the corresponding interior or boundary Schauder estimate gives
\begin{equation}\label{eq:zy-R-estimate}
 |D_zD_{s,y}R|\le Cr^{1/2}.
\end{equation}
Estimates \eqref{eq:horizontal-R-estimates}--\eqref{eq:zy-R-estimate} imply \eqref{eq:edge-R-tt}--\eqref{eq:edge-R-nn}. The constants are independent of the ratio $|y|/r$, including in the limit toward either Dirichlet side of the slit.

We apply the local estimate to $v=W_{D_\tau}$. Every $D_\tau$ is a bounded $C^\infty$ domain and therefore a Lipschitz domain satisfying a uniform exterior ball condition. Applied for the fractional order $1/2$ and right-hand side $1$, \cite[Proposition~1.1]{RosOtonSerra2014} gives $u_{D_\tau}\in C^{1/2}(\R^n)$. Thus $W_{D_\tau}$ is continuous in each local slit neighborhood and vanishes continuously on the slit. By (F4), one common radius and one dilation reduce every edge chart to the normalized setting above. If $B_R$ contains all $D_\tau$, domain monotonicity and \eqref{eq:ball-torsion}, after scaling, imply
\[
 0\le u_{D_\tau}\le u_{B_R}.
\]
Positivity of the Poisson kernel and the bounded range of $y$ in the local coordinate neighborhoods yield a uniform local $L^\infty$ bound for $W_{D_\tau}$. The constants in the De Silva--Savin estimate and in the local Schauder estimates are therefore independent of $\tau$ and of the edge point. In the Fermi coordinates \eqref{eq:fermi}, one has $d=s$, $r=\rho$, and $U_0=h$, which proves \eqref{eq:edge-expansion} and the upper bounds in \eqref{eq:edge-coefficient}--\eqref{eq:edge-R-nn}. Only local slit neighborhoods are used; no boundedness assumption on the global slit is required.

To prove the lower bound in \eqref{eq:edge-coefficient}, let $r_*>0$ be a common interior rolling-ball radius, whose existence follows from (F2)--(F4). At an edge point, comparison with the interior tangent ball and domain monotonicity yield, for $0<s<r_*$,
\[
 u_{D_\tau}(z,s)\ge
 \kappa_n(2r_*s-s^2)^{1/2},
 \qquad
 \kappa_n=\frac{\Gamma(n/2)}{\sqrt\pi\,\Gamma((n+1)/2)}.
\]
Since $h(s,0)=\sqrt{s}$ and $R_\tau(z,s,0)=O(s^{3/2})$, division by $\sqrt{s}$ and passage to the limit give
\[
 a_\tau(z)\ge\kappa_n\sqrt{2r_*}=:a_0>0.
\]
This completes the proof.
\end{proof}

\subsection{Schur-complement analysis near the slit edge}

The normal Hessian of $h$ is
\begin{equation}\label{eq:h-hessian}
 D^2_{s,y}h
 =\frac1{4\rho^{3/2}}
 \begin{pmatrix}
 -\cos(3\theta/2)&-\sin(3\theta/2)\\
 -\sin(3\theta/2)&\cos(3\theta/2)
 \end{pmatrix}.
\end{equation}
Thus
\begin{equation}\label{eq:h-det}
 \det D^2_{s,y}h=-\frac1{16\rho^3},
 \qquad
 \In D^2_{s,y}h=(1,1).
\end{equation}
The $1/2$-homogeneity of $h$ gives
\begin{equation}\label{eq:euler-h}
 D^2h\binom{s}{y}=-\frac12\nabla h,
 \qquad
 \nabla h^{\mathsf T}(D^2h)^{-1}\nabla h=-h.
\end{equation}
Also
\begin{equation}\label{eq:hs}
 h_s=\frac{h}{2\rho}.
\end{equation}

Let
\[
 \Sigma_{\tau,s}:=\{x:d_\tau(x)=s\}
\]
and use an orthonormal Fermi frame adapted to $T_x\Sigma_{\tau,s}\oplus\Span\{\partial_s,\partial_y\}$. Relative to this splitting, write
\begin{equation}\label{eq:block-hessian}
 D^2V_{D_\tau,\eps}
 =\begin{pmatrix}
 T_\eps&B\\
 B^{\mathsf T}&N_\eps
 \end{pmatrix}.
\end{equation}
Let $S_{\tau,s}$ be the shape operator of $\Sigma_{\tau,s}$, with the convention $S_{\tau,0}>0$, and extend $a_\tau$ constantly along the normal segments. In the formulas below, $\nabla_{\Sigma_{\tau,s}}a_\tau$ and $\nabla^2_{\Sigma_{\tau,s}}a_\tau$ denote the tangential covariant gradient and Hessian on $\Sigma_{\tau,s}$. The leading terms in the three blocks are
\begin{align}
 T_0&=-a_\tau h_sS_{\tau,s}
 +h\nabla^2_{\Sigma_{\tau,s}}a_\tau,
 \label{eq:T0}\\
 B_0&=\nabla_{\Sigma_{\tau,s}}a_\tau\otimes\nabla_{s,y}h,
 \label{eq:B0}\\
 N_0&=a_\tau D^2_{s,y}h.
 \label{eq:N0}
\end{align}
The exact tangential--$s$ block also contains $hS_{\tau,s}\nabla_{\Sigma_{\tau,s}}a_\tau$. Since $h=O(\rho^{1/2})$, this term is included in the $O(\rho^{1/2})$ remainder in \eqref{eq:B-error}. The displayed blocks contain all leading-order terms.

By \eqref{eq:euler-h},
\begin{equation}\label{eq:exact-cancellation}
 B_0N_0^{-1}B_0^{\mathsf T}
 =-\frac{h}{a_\tau}\,
 \nabla_{\Sigma_{\tau,s}}a_\tau\otimes
 \nabla_{\Sigma_{\tau,s}}a_\tau.
\end{equation}
The Schur complement of $N_0$ in the model block matrix is
\begin{equation}\label{eq:model-schur}
 \mathscr S_0
 =-a_\tau h_sS_{\tau,s}
 +h\left(
 \nabla^2_{\Sigma_{\tau,s}}a_\tau+
 \frac1{a_\tau}\nabla_{\Sigma_{\tau,s}}a_\tau\otimes\nabla_{\Sigma_{\tau,s}}a_\tau
 \right).
\end{equation}
Since $h_s=h/(2\rho)$, $a_\tau\ge a_0$, $S_{\tau,s}\ge\kappa_0I/2$, and the term multiplied by $h$ in \eqref{eq:model-schur} is bounded in norm by $C$, there are uniform constants $c_0,\rho_1>0$ such that
\begin{equation}\label{eq:S0-negative}
 \mathscr S_0\le-c_0\frac{h}{\rho}I_{n-1}\le0
 \qquad(0<\rho<\rho_1).
\end{equation}
The first inequality is strict whenever $h>0$. As the slit edge is approached from the exterior Dirichlet face, $h\to0$; the term $-\eps I_{n-1}$ then gives the required uniform negative upper bound.

\begin{proposition}\label{prop:edge-inertia}
There exist constants $c_e,C_e>0$, depending only on the uniform geometric data in (F1)--(F4), such that, for every $\tau\in[0,1]$, $0<\eps\le1$, and $X\in\cG_{D_\tau}$ satisfying
\begin{equation}\label{eq:edge-radius}
 0<\rho=\dist(X,\partial D_\tau\times\{0\})<c_e\eps^2,
\end{equation}
one has
\begin{equation}\label{eq:edge-inertia}
 \In D^2V_{D_\tau,\eps}=(1,n)
\end{equation}
and
\begin{equation}\label{eq:edge-det-lower}
 |\det D^2V_{D_\tau,\eps}(X)|
 \ge C_e^{-1}\eps^{n-1}\rho^{-3}.
\end{equation}
The same estimates hold for the one-sided limits of the Hessian on the relative interiors of the two Dirichlet sides of the slit.
\end{proposition}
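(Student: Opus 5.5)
The plan is to prove the inertia statement through the block decomposition \eqref{eq:block-hessian} and the Haynsworth inertia additivity formula
\[
 \In D^2V_{D_\tau,\eps}=\In N_\eps+\In\bigl(T_\eps-BN_\eps^{-1}B^{\mathsf T}\bigr),
\]
which holds whenever $N_\eps$ is nonsingular. The determinant bound will then follow from $\det D^2V=\det N_\eps\cdot\det(T_\eps-BN_\eps^{-1}B^{\mathsf T})$. All constants will come from \cref{prop:uniform-edge}, so they are uniform in $\tau$, the chart and the edge point.

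\emph{Step 1 (block errors).} I would compute $D^2(a_\tau h)$ in the orthonormal Fermi frame. The blocks of the ambient Hessian in Fermi coordinates differ from coordinate second derivatives only by Christoffel-type terms. These involve first derivatives times the shape operator $S_{\tau,s}$, with coefficients bounded by (F3)--(F4); for $a_\tau h$ they produce exactly the displayed terms $-a_\tau h_sS_{\tau,s}$ and $hS_{\tau,s}\nabla a_\tau$. For the remainder $R_\tau$, I would apply \eqref{eq:edge-R-tt}--\eqref{eq:edge-R-nn} together with the same Christoffel conversion. The tangential block of $D^2R_\tau$ is $D_z^2R_\tau$ plus (shape operator)$\cdot\partial_sR_\tau$ plus lower-order terms, hence $O(\rho^{1/2})$. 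The mixed block is $O(\rho^{1/2})$ and the normal block is $O(\rho^{-1/2})$. Adding $\eps D^2Q_n=\eps\,\diag(-I_n,n)$, which in the adapted frame contributes $-\eps I_{n-1}$ to the tangential block, nothing to $B$, and $\eps\,\diag(-1,n)$ to the normal block, I expect
\[
 T_\eps=T_0-\eps I_{n-1}+O(\rho^{1/2}),\qquad B=B_0+O(\rho^{1/2}),\qquad N_\eps=N_0+O(\rho^{-1/2}+\eps).
\]

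\emph{Step 2 (normal block).} By \eqref{eq:h-hessian}, $N_0$ has eigenvalues $\pm a_\tau/(4\rho^{3/2})$ with $a_\tau\ge a_0$. Its perturbation is relatively $O(\rho+\eps\rho^{3/2})$, so for $\rho$ small $N_\eps$ has inertia $(1,1)$. This gives $\|N_\eps^{-1}\|\le C\rho^{3/2}$, $\|N_\eps^{-1}-N_0^{-1}\|\le C\rho^{5/2}$, and $|\det N_\eps|\ge c\rho^{-3}$.

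\emph{Step 3 (Schur complement).} Since $\|B_0\|\le C\rho^{-1/2}$ by \eqref{eq:B0}, we have
\[
 \|B_0(N_\eps^{-1}-N_0^{-1})B_0^{\mathsf T}\|\le C\rho^{3/2},\qquad \|B_0N_\eps^{-1}(B-B_0)^{\mathsf T}\|\le C\rho^{1/2}\rho^{3/2}\rho^{-1/2}=C\rho^{3/2},
\]
and the quadratic error in $B-B_0$ is smaller still. Hence
\[
 T_\eps-BN_\eps^{-1}B^{\mathsf T}=\mathscr S_0-\eps I_{n-1}+O(\rho^{1/2}).
\]
By \eqref{eq:S0-negative} this is $\le-\eps I+C\rho^{1/2}I\le-\tfrac{\eps}{2}I$ once $\rho<c_e\eps^2$ with $c_e$ small; I would also take $c_e\le\rho_1$, using $\eps\le1$. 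The Schur complement therefore has inertia $(0,n-1)$ and absolute determinant at least $(\eps/2)^{n-1}$. The additivity formula gives $(1,1)+(0,n-1)=(1,n)$, and the determinant product gives \eqref{eq:edge-det-lower}.

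The one-sided statements on the Dirichlet faces follow because every estimate in \cref{prop:uniform-edge} holds for one-sided derivatives, uniformly up to $y=0$. There $h=0$, but the negativity of the Schur complement comes from the $-\eps I$ term, which is the reason for the $\eps$-dependent radius. The step I expect to need the most care is Step 1: verifying that converting the Euclidean Hessian to the Fermi frame introduces no error larger than $O(\rho^{1/2})$ in the tangential and mixed blocks. In particular, the $\partial_sR_\tau\cdot S_{\tau,s}$ term must be controlled by \eqref{eq:edge-R-tn}, and the $s$-dependence of the frame acting on $a_\tau h$ must be fully captured by \eqref{eq:T0}--\eqref{eq:N0}.
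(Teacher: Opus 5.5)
Your proposal is correct and follows the paper's proof essentially step for step. It uses the same block errors for $T_\eps$, $B$, $N_\eps$ relative to the model blocks, the bounds $\|N_\eps^{-1}\|\le C\rho^{3/2}$ and the resolvent comparison with $N_0^{-1}$, the Schur complement $\mathscr S_0-\eps I_{n-1}+O(\rho^{1/2})\le-\frac{\eps}{2}I_{n-1}$ forced by $\rho<c_e\eps^2$, and then inertia additivity and the block determinant identity. Absorbing the paper's extra $\eps\rho^3$ and $\eps\rho^2$ error terms into $O(\rho^{5/2})$ and $O(\rho^{1/2})$, using $\eps\le1$ and small $\rho$, is harmless.
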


\begin{proof}
We compare the exact Hessian with the model blocks \eqref{eq:T0}--\eqref{eq:N0}. The edge expansion and the Euclidean Hessian formulas in Fermi coordinates give, in operator norm,
\begin{align}
 N_\eps&=N_0+\eps\diag(-1,n)+O(\rho^{-1/2}),
 \label{eq:N-error}\\
 B&=B_0+O(\rho^{1/2}),
 \label{eq:B-error}\\
 T_\eps&=T_0-\eps I_{n-1}+O(\rho^{1/2}).
 \label{eq:T-error}
\end{align}
The $O(\rho^{1/2})$ term in \eqref{eq:T-error} includes the second tangential derivatives of the remainder and the Fermi connection applied to its first normal derivative. The same order also absorbs the lower-order mixed term $hS_{\tau,s}\nabla_{\Sigma_{\tau,s}}a_\tau$ mentioned above.

From \eqref{eq:h-hessian}--\eqref{eq:N-error}, for $\rho$ small,
\begin{equation}\label{eq:N-inertia}
 \In N_\eps=(1,1),
 \qquad
 \|N_\eps^{-1}\|\le C\rho^{3/2},
 \qquad
 |\det N_\eps|\ge c\rho^{-3}.
\end{equation}
The inverse of the normal block changes only at lower order. Indeed, the resolvent identity $A^{-1}-B^{-1}=A^{-1}(B-A)B^{-1}$ gives
\begin{equation}\label{eq:N-inverse-diff}
 \|N_\eps^{-1}-N_0^{-1}\|
 \le C(\rho^{5/2}+\eps\rho^3).
\end{equation}
Since $\|B_0\|=O(\rho^{-1/2})$, equations \eqref{eq:B-error} and \eqref{eq:N-inverse-diff} imply
\begin{equation}\label{eq:schur-error}
 T_\eps-BN_\eps^{-1}B^{\mathsf T}
 =\mathscr S_0-\eps I_{n-1}+E,
 \qquad
 \|E\|\le C(\rho^{1/2}+\eps\rho^2).
\end{equation}
Choose $c_e$ in \eqref{eq:edge-radius} sufficiently small. Then $\rho<c_e\eps^2$ implies
\[
 \|E\|\le\frac\eps2.
\]
Together with \eqref{eq:S0-negative}, this yields
\begin{equation}\label{eq:schur-strict}
 T_\eps-BN_\eps^{-1}B^{\mathsf T}
 \le-\frac\eps2I_{n-1}.
\end{equation}
A symmetric block matrix with invertible normal block is congruent to the diagonal matrix formed by that block and its Schur complement. Sylvester's inertia formula therefore gives
\[
 \In D^2V_{D_\tau,\eps}
 =\In N_\eps+
 \In(T_\eps-BN_\eps^{-1}B^{\mathsf T})
 =(1,1)+(0,n-1)=(1,n).
\]
The block determinant identity, \eqref{eq:N-inertia}, and \eqref{eq:schur-strict} give \eqref{eq:edge-det-lower}.
\end{proof}

\section{Boundary and far-field Hessian estimates}\label{sec:boundary-far-field}

To apply the method of continuity in the truncated slit domain $\cG_{D_\tau}\cap B_M^N$, we require uniform nondegeneracy near the slit edge, near the exterior Dirichlet boundary $D_\tau^c\times\{0\}$ away from the edge, and near the truncation sphere $\partial B_M^N$. The first region was treated in the preceding section; this section treats the remaining two.

\subsection{Exterior Dirichlet boundary away from the slit edge}

Fix $M>0$ and $h_0>0$. For $x\in D_\tau^c$ with $|x|\le M$ and $\dist(x,D_\tau)\ge h_0$, set
\begin{equation}\label{eq:g}
 g_\tau(x)=c_n\int_{D_\tau}
 \frac{u_{D_\tau}(z)}{|x-z|^{N}}\,\dd z.
\end{equation}
The separation condition from $D_\tau$ permits differentiation under the integral, with bounds uniform in $\tau$. Since $U_{D_\tau}=0$ on this portion of $\{y=0\}$, its odd reflection is harmonic in a full neighborhood. Taylor expansion in $y$ gives
\begin{equation}\label{eq:exterior-taylor}
 U_{D_\tau}(x,y)
 =yg_\tau(x)-\frac{y^3}{6}\Delta g_\tau(x)+\mathcal R_\tau(x,y).
\end{equation}
Here, uniformly on the indicated exterior sets,
\[
 |D_x^\gamma\partial_y^\ell\mathcal R_\tau(x,y)|
 \le C_{M,h_0}|y|^{5-\ell}
 \qquad(|\gamma|+\ell\le2).
\]
Since $W_{D_\tau}-U_{D_\tau}=y$ for $y>0$, the Hessian of $V_{D_\tau,\eps}$ satisfies
\begin{equation}\label{eq:flat-hessian}
 D^2V_{D_\tau,\eps}(x,y)
 =\begin{pmatrix}
 -\eps I_n+yD^2g_\tau+O(y^3)&
 \nabla g_\tau+O(y^2)\\
 \nabla g_\tau^{\mathsf T}+O(y^2)&
 n\eps-y\Delta g_\tau+O(y^3)
 \end{pmatrix}.
\end{equation}

\begin{proposition}\label{prop:dirichlet-boundary}
Fix $M,h_0>0$ and $0<\eps\le1$. There exists $\eta=\eta(M,h_0,\eps)>0$, independent of $\tau$, such that, whenever
\[
 |x|\le M,
 \qquad
 x\in D_\tau^c,
 \qquad
 \dist(x,D_\tau)\ge h_0,
 \qquad
 0<|y|<\eta,
\]
one has
\[
 \In D^2V_{D_\tau,\eps}(x,y)=(1,n)
\]
and
\begin{equation}\label{eq:flat-det-lower}
 |\det D^2V_{D_\tau,\eps}(x,y)|
 \ge \frac n2\eps^{n+1}.
\end{equation}
\end{proposition}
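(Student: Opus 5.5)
We analyze \eqref{eq:flat-hessian} directly. The natural splitting is the $x$-block versus the $y$-entry. At $y=0$ the one-sided limit is
\[
 H_0=\begin{pmatrix}-\eps I_n&\nabla g_\tau\\ \nabla g_\tau^{\mathsf T}&n\eps\end{pmatrix}.
\]
The $x$-block $-\eps I_n$ is negative definite. Its Schur complement is the scalar
\[
 n\eps+\eps^{-1}|\nabla g_\tau|^2\ge n\eps>0.
\]
By Sylvester's law, $\In H_0=(0,n)+(1,0)=(1,n)$. For the determinant, the block formula gives
\[
 \det H_0=(-\eps)^n\bigl(n\eps+\eps^{-1}|\nabla g_\tau|^2\bigr),
\]
so $|\det H_0|\ge n\eps^{n+1}$. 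This is twice the required bound, which leaves room for the perturbation.

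\emph{Uniform bounds on $g_\tau$.} For $|x|\le M$ and $\dist(x,D_\tau)\ge h_0$, the kernel $|x-z|^{-N}$ and its first three $x$-derivatives are bounded by $C(h_0)$. By domain monotonicity (\cref{lem:variational}), $0\le u_{D_\tau}\le u_{B_+}$, and $|D_\tau|\le|B_+|$ by (F2). Hence $\|g_\tau\|_{C^3}\le C(M,h_0)$, uniformly in $\tau$. Together with the stated uniform bounds on $\mathcal R_\tau$, this gives
\[
 \|D^2V_{D_\tau,\eps}(x,y)-H_0(x)\|\le C_1(M,h_0)\,|y|
 \qquad(|y|\le1),
\]
with $C_1$ independent of $\tau$ and $\eps$. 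For $y<0$, the evenness of $W$ and the even harmonic $Q_n$ give the same estimate, with mixed entries of opposite sign. This does not affect the inertia or the determinant bounds.

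\emph{Perturbation step.} This is where $\eta$ is chosen, and it is the only delicate point: the admissible size of the perturbation has to be measured against $\eps$. Take $\eta\le\eps/(2C_1)$. Then the $x$-block satisfies $\le-\eps I_n/2$. Its inverse has norm $\le 2/\eps$. The Schur complement becomes
\[
 n\eps+O(C_1|y|)+\bigl(|\nabla g_\tau|^2+O(|y|)\bigr)\big/\bigl(\eps+O(|y|)\bigr).
\]
This stays $\ge n\eps/2$ once $\eta\le c\,\eps^2/C(M,h_0)$, using $|\nabla g_\tau|\le C$. Sylvester's law then gives inertia $(1,n)$.

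\emph{Determinant bound.} The determinant is a polynomial in the matrix entries, and the entries are uniformly bounded. So it is Lipschitz in the entries with constant $C(M,h_0)$, and
\[
 |\det D^2V-\det H_0|\le C'|y|.
\]
Shrinking $\eta$ so that $C'\eta\le\tfrac n2\eps^{n+1}$ gives \eqref{eq:flat-det-lower} from $|\det H_0|\ge n\eps^{n+1}$. All constants depend only on $M$, $h_0$, $\eps$ and the uniform geometry (F2), so $\eta$ is independent of $\tau$.
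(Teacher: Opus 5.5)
Your argument is correct and follows the paper's route. You compute the one-sided limit of the Hessian at $y=0^+$, read off inertia $(1,n)$ and $|\det|=\eps^{n-1}(n\eps^2+|\nabla g_\tau|^2)\ge n\eps^{n+1}$, then perturb using the $\tau$-uniform Taylor expansion \eqref{eq:flat-hessian} and handle $y<0$ by reflection. The only difference is minor: you certify and preserve the inertia through the Schur complement of the negative definite $x$-block, where $-b^{\mathsf T}T^{-1}b\ge0$ makes that step almost free, whereas the paper splits off $p^\perp$ and uses the eigenvalue gap $n\eps^{n+1}/L^n$; in both versions it is the determinant bound that forces $\eta\lesssim\eps^{n+1}$.
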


\begin{proof}
The one-sided limit of the Hessian on this Dirichlet boundary is
\[
 \begin{pmatrix}
 -\eps I_n&p\\
 p^{\mathsf T}&n\eps
 \end{pmatrix},
 \qquad p=\nabla g_\tau(x).
\]
If $p=0$, the matrix equals $\diag(-\eps I_n,n\eps)$ and has inertia $(1,n)$. If $p\ne0$, the subspace $p^\perp\subset\R^n$ contributes $n-1$ eigenvalues equal to $-\eps$, while the restriction to $\Span\{p,e_y\}$ has determinant
\[
 -n\eps^2-|p|^2<0.
\]
In both cases the boundary Hessian has inertia $(1,n)$ and
\[
 |\det|=\eps^{n-1}(n\eps^2+|p|^2)
 \ge n\eps^{n+1}.
\]
The derivatives of $g_\tau$ are uniformly bounded on the indicated set, so the operator norm of the limiting matrix is bounded by $L=L(M,h_0,\eps)$. The determinant bound implies that every eigenvalue has absolute value at least $n\eps^{n+1}/L^n$. The Taylor estimate \eqref{eq:flat-hessian} therefore preserves the inertia and at least one half of the determinant bound when $0<|y|<\eta(M,h_0,\eps)$. For $y<0$, reflection conjugates the Hessian by $\diag(I_n,-1)$; congruence preserves both inertia and the absolute value of the determinant.
\end{proof}

\subsection{Far-field Hessian estimates}

All domains in an admissible family lie in a fixed ball and contain a fixed ball. For $X=(x,y)$, write $R=|X|$ and put
\begin{equation}\label{eq:mu}
 \mu_\tau=c_n\int_{D_\tau}u_{D_\tau}(z)\,\dd z.
\end{equation}
Comparison with fixed balls gives
\begin{equation}\label{eq:mu-bounds}
 0<\mu_-\le\mu_\tau\le\mu_+<\infty.
\end{equation}
Taylor expansion of the Poisson kernel in the bounded variable $z$, followed by differentiation under the integral, gives the uniform far-field expansion
\begin{equation}\label{eq:far-field-expansion}
 U_{D_\tau}(X)=\mu_\tau yR^{-N}+E_\tau(X),
 \qquad
 |D^kE_\tau(X)|\le C_kR^{-N-k}
 \quad(0\le k\le4).
\end{equation}
Let
\[
 \Phi_\tau(X)=\mu_\tau yR^{-N},
 \qquad
 v=\frac XR,
 \qquad
 e=e_y,
 \qquad
 \sigma=e\cdot v=\frac yR,
 \qquad
 K=N\mu_\tau R^{-N-1}.
\]
Differentiation gives
\begin{equation}\label{eq:far-hessian-model}
 D^2\Phi_\tau
 =K\{-e\otimes v-v\otimes e-\sigma I_N
 +(N+2)\sigma v\otimes v\}.
\end{equation}
If $x\ne0$, set $\omega=x/|x|$. On $\Span\{e_y,\omega\}^\perp$, the eigenvalue is $-K\sigma$ with multiplicity $n-1$, while
\begin{equation}\label{eq:far-plane-det}
 \det\left(D^2\Phi_\tau\big|_{\Span\{e_y,\omega\}}\right)
 =-K^2\{1+(n-1)\sigma^2\}<0.
\end{equation}
At $x=0$, formula \eqref{eq:far-hessian-model} reduces to
\[
 D^2\Phi_\tau(0,y)
 =N\mu_\tau y^{-N-1}\diag(-I_n,n).
\]
Consequently, $\In D^2\Phi_\tau=(1,n)$ for $y>0$.

Near the hyperplane $\{y=0\}$, the factor $\sigma$ in the angular eigenvalues must be retained. For $r=|x|$ large, \eqref{eq:g} satisfies
\begin{align}
 g_\tau(x)&=\mu_\tau r^{-N}+O(r^{-N-1}),
 \label{eq:g-far0}\\
 \nabla g_\tau(x)&=-N\mu_\tau r^{-N-1}\frac xr+O(r^{-N-2}),
 \label{eq:g-far1}\\
 D^2g_\tau(x)&=N\mu_\tau r^{-N-2}
 \left((N+2)\frac xr\otimes\frac xr-I_n\right)
 +O(r^{-N-3}).
 \label{eq:g-far2}
\end{align}

\begin{proposition}\label{prop:large-sphere}
There exist $M_0>1$, $c_\infty>0$, and $\delta_\infty>0$, depending only on the admissible family, such that for every $M\ge M_0$ and
\begin{equation}\label{eq:eps-far}
 0<\eps\le c_\infty\mu_-M^{-(n+2)},
\end{equation}
we have
\begin{equation}\label{eq:far-inertia}
 \In D^2V_{D_\tau,\eps}=(1,n)
\end{equation}
throughout
\[
 \{X:M-\delta_\infty<|X|<M+\delta_\infty\}\cap\cG_{D_\tau}.
\]
For every fixed pair $(M,\eps)$ satisfying \eqref{eq:eps-far}, there exists $\delta^{\infty}_{M,\eps}>0$ such that
\[
 |\det D^2V_{D_\tau,\eps}|\ge\delta^{\infty}_{M,\eps}
\]
in the same annular neighborhood, uniformly for $\tau\in[0,1]$.
\end{proposition}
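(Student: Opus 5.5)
We reduce to $y\ge0$: for $y<0$ the Hessian is conjugated by $\diag(I_n,-1)$, which preserves both inertia and $|\det|$. On the annulus $A_M=\{M-\delta_\infty<|X|<M+\delta_\infty\}$ we write $\sigma=y/R$ and compare
\[
 D^2V_{D_\tau,\eps}=D^2\Phi_\tau+D^2E_\tau+\eps D^2Q_n
\]
with the model \eqref{eq:far-hessian-model}. The errors satisfy $\|D^2E_\tau\|\le C_2R^{-N-2}=O(K/R)$ by \eqref{eq:far-field-expansion}, and $\|\eps D^2Q_n\|\le n\eps$. Condition \eqref{eq:eps-far} gives $n\eps\le nc_\infty\mu_-M^{-N-1}$, which is a small multiple of $K\simeq N\mu_\tau M^{-N-1}$ once $c_\infty$ is small (using $\mu_-\le\mu_\tau$ and $R\simeq M$). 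We split the annulus into two regimes according to $\sigma$. Here $\sigma_0$ is a small fixed angle, chosen first.

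\emph{Regime $\sigma\ge\sigma_0$.} By \eqref{eq:far-plane-det} and the eigenvalue $-K\sigma$ on $\Span\{e_y,\omega\}^\perp$, every eigenvalue of $D^2\Phi_\tau$ has modulus $\ge c(\sigma_0)K$, and its inertia is $(1,n)$. This includes the case $x=0$, via the diagonal formula. Perturbations of size $O(K/M)+n\eps\le c(\sigma_0)K/2$, obtained by choosing $M_0$ large and $c_\infty$ small, preserve this inertia. They also give $|\det|\ge(cK)^N$, which is uniform in $\tau$.

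\emph{Regime $0\le\sigma<\sigma_0$.} Here $r=|x|\simeq R$, so $X$ lies over $D_\tau^c$ at distance $\gtrsim M$ from $D_\tau$ and far from the edge. The angular eigenvalue $-K\sigma$ degenerates, so the $\eps$-term must be kept; this is where the interplay between $\eps$ and $y$ matters, and it is the main obstacle. We would use the exterior Taylor form \eqref{eq:flat-hessian} together with the far expansions \eqref{eq:g-far0}--\eqref{eq:g-far2}. Rather than relying on a small-$y$ remainder, we would use the exact $y$-dependence of $\Phi_\tau$ plus the $E_\tau$ bounds.

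In the frame $\{\omega,e_y\}\oplus\omega^\perp$ we compute as follows. On $\omega^\perp$ the block is
\[
 -\eps I_{n-1}-K\sigma I_{n-1}+O(K\sigma/M)+O(y\,r^{-N-3}),
\]
and the last error is $\le CK\sigma/M$. Since both $-\eps$ and $-K\sigma$ have the same sign, this block is negative definite, with eigenvalues $\le-(\eps+K\sigma)/2$. The $\{\omega,e_y\}$ block has off-diagonal entry $\approx -K(1-(N+2)\sigma^2)$ of size $\simeq K$ and diagonal entries $O(K\sigma+\eps)$. Its determinant is therefore $\le -K^2/2$, so this block has inertia $(1,1)$ with eigenvalues of modulus $\simeq K$.

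The mixed entries between the two pieces are the remaining difficulty. The model has none, because the model is rotation-invariant about $e_y$. The error $E_\tau$ does not vanish on $\{y=0\}$ in its Hessian, but it is odd in $y$ there (since $U=0$ on the slit), so its $x$-block is $O(y R^{-N-3})=O(K\sigma/M)$. Its $(x,y)$ entries are $O(R^{-N-2})=O(K/M)$; these couple only to the nondegenerate $\{\omega,e_y\}$ block. A Schur complement with respect to the $\{\omega,e_y\}$ block then changes the $\omega^\perp$ block by $O((K/M)^2/K)=O(K/M^2)$.

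That correction is not obviously dominated by $\eps+K\sigma$ at $\sigma=0$, and this is the point to check carefully. We would refine it using the structure of \eqref{eq:g-far1}: the $O(r^{-N-2})$ correction to $\nabla g_\tau$ is the dipole term. Its component in $\omega^\perp$ gives a Schur correction of order
\[
 \frac{(r^{-N-2})^2}{r^{-N-1}}\cdot\frac{\eps+K\sigma}{K}\text{-weighted terms},
\]
which are small relative to $\eps+K\sigma$ because the off-diagonal inverse of the $2\times2$ block is $\simeq K^{-1}$ with diagonal $O((\eps+K\sigma)/K^2)$. Granting this, Sylvester's law gives inertia $(1,1)+(0,n-1)=(1,n)$. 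The determinant is then bounded below by $cK^2\,(\eps/2)^{n-1}$, which is the positive constant $\delta^\infty_{M,\eps}$, uniform in $\tau$ by \eqref{eq:mu-bounds} and the uniform constants in \eqref{eq:far-field-expansion}.
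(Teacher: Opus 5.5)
Your argument is correct in substance. It differs from the paper's route at the one delicate point: near the exterior slit, where the angular eigenvalue $-K\sigma$ degenerates and only $-\eps$ remains.

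\textbf{The paper's route.} The paper splits by height ($y\ge L$ and $0\le y\le 2L$, with $L$ independent of $M$) rather than by the angle $\sigma$. In the low strip it uses the Taylor expansions \eqref{eq:far-taylor-xx}--\eqref{eq:far-taylor-yy} from the exterior Dirichlet boundary, and it decomposes along $\nu=\nabla g_\tau/|\nabla g_\tau|$ instead of $\omega=x/|x|$. Since $P_{\mathcal H_\nu}\nabla g_\tau=0$, the coupling of $\mathcal H_\nu$ to $e_y$ is $O(Gy^2/r^2)$ and the coupling to $\nu$ is $O(Gy/r^2)$. The crude bound $\|B\|^2\|A^{-1}\|\lesssim G(y+y^2)^2/r^4$ therefore vanishes at $y=0$ and is absorbed by $c(\eps+Gy/r)$, however small $\eps$ is.

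\textbf{Your route.} In your monopole frame $\omega$, the coupling $b=P_{\omega^\perp}\nabla_x(E_\tau)_y=O(K/M)$ survives at $y=0$. The crude bound $K/M^2$ is then genuinely insufficient, since \eqref{eq:eps-far} bounds $\eps$ only from above. The step you ``grant'' is nevertheless true, and you should write it out. Write $B=[\beta\mid b]$, where $\beta=P_{\omega^\perp}D_x^2E_\tau\,\omega=O(K\sigma/M)$ because $E_\tau(x,0)=0$ off $D_\tau$. Then $(A^{-1})_{yy}=A_{\omega\omega}/\det A=O((\eps+K\sigma)/K^2)$, because the $\omega\omega$ entry of $D^2V_{D_\tau,\eps}$ is $-\eps+O(K\sigma)$. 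Also $(A^{-1})_{\omega y}=O(1/K)$. Hence
\[
 BA^{-1}B^{\mathsf T}
 =b\,(A^{-1})_{yy}\,b^{\mathsf T}
 +O\Bigl(\frac{|\beta|\,|b|+|\beta|^2}{K}\Bigr)
 =O\Bigl(\frac{\eps+K\sigma}{M^2}\Bigr),
\]
which is absorbed by your bound $-(\eps+K\sigma)/2$ on the $\omega^\perp$ block. You do not mention the cross term $\beta(A^{-1})_{\omega y}b^{\mathsf T}$. It is harmless only because $\beta$ vanishes at $y=0$, so that fact must be stated explicitly.

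\textbf{What each approach buys.} The paper's rotated frame reduces the Schur step to a one-line norm estimate. The cost is the second-order asymptotics \eqref{eq:g-far1}--\eqref{eq:g-far2} of $g_\tau$, plus a separate high region with an overlap. Your version works with the exact monopole $\Phi_\tau$, the bounds \eqref{eq:far-field-expansion}, and the oddness of $E_\tau$ in $y$, over the whole sector $\sigma<\sigma_0$ at once. The price is an entrywise rather than normwise treatment of $A^{-1}$. As a bonus, you get the explicit determinant bound $cK^2(\eps/2)^{n-1}$, whereas the paper obtains $\delta^\infty_{M,\eps}$ by compactness.
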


\begin{proof}
By reflection symmetry it suffices to work in $y\ge0$. Choose $L>0$ sufficiently large, independently of $M$, and cover the annulus by $\{y\ge L\}$ and $\{0\le y\le2L\}$. In the first region the leading term in \eqref{eq:far-field-expansion} controls the eigenvalues tangent to horizontal spheres. In the second region we use Taylor expansion from the exterior Dirichlet portion of $\{y=0\}$.

\smallskip
\noindent\emph{Case 1: $y\ge L$.}
Assume $R\simeq M$. For $r=|x|>0$, let $\omega=x/r$. The orthogonal complement of $\Span\{e_y,\omega\}$ has dimension $n-1$, and the model Hessian in \eqref{eq:far-hessian-model} equals $-K\sigma I$ on this space. The determinant of its restriction to $\Span\{e_y,\omega\}$ is given by \eqref{eq:far-plane-det}. At $r=0$, the same estimates follow from the displayed diagonal formula above. Since the norm of the two-dimensional block is at most $CK$ and the absolute value of its determinant is at least $K^2$, both singular values are bounded below by $cK$.

The eigenvalues in the horizontal spherical directions satisfy
\[
 K\sigma\ge cK\frac{L}{M},
\]
whereas \eqref{eq:far-field-expansion} gives
\[
 \|D^2E_\tau\|\le C\frac{K}{M}.
\]
Choose $L$ so that $CK/M\le\frac12K\sigma$ in the horizontal spherical directions. Then choose $M_0$ large and $c_\infty$ small. Condition \eqref{eq:eps-far} implies $\eps\le cK$, so the perturbation of the two-dimensional block by $D^2E_\tau+\eps D^2Q_n$ has norm smaller than its spectral gap. In the horizontal spherical directions, the contribution of $\eps D^2Q_n$ equals $-\eps I$ and is nonpositive.

\smallskip
\noindent\emph{Case 2: $0\le y\le2L$.}
Here $r\simeq M$. Taylor expansion from the exterior Dirichlet boundary gives
\begin{align}
 D^2_{xx}U_{D_\tau}
 &=yD^2g_\tau+O\left(\frac{Gy^3}{r^3}\right),
 \label{eq:far-taylor-xx}\\
 \nabla_x(U_{D_\tau})_y
 &=\nabla g_\tau+O\left(\frac{Gy^2}{r^2}\right),
 \label{eq:far-taylor-xy}\\
 (U_{D_\tau})_{yy}
 &=-y\Delta g_\tau+O\left(\frac{Gy^3}{r^3}\right),
 \label{eq:far-taylor-yy}
\end{align}
where $G=|\nabla g_\tau|$ and all constants are uniform. By \eqref{eq:g-far1}, after increasing $M_0$ if necessary, $G>0$ and
\begin{equation}\label{eq:G-far}
 G=N\mu_\tau r^{-N-1}(1+O(r^{-1})).
\end{equation}
Set
\[
 \nu=\frac{\nabla g_\tau}{G},
 \qquad
 \mathcal H_\nu:=\nu^\perp\subset\R^n.
\]
Let $P_{\mathcal H_\nu}$ denote the orthogonal projection of $\R^n$ onto $\mathcal H_\nu$. Equations \eqref{eq:g-far1}--\eqref{eq:g-far2} give
\begin{equation}\label{eq:E-block-g}
 D^2g_\tau|_{\mathcal H_\nu}\le-\frac{G}{2r}I_{\mathcal H_\nu},
 \qquad
 \|P_{\mathcal H_\nu}D^2g_\tau\nu\|\le C\frac{G}{r^2}.
\end{equation}

Decompose the extended Hessian relative to
\[
 \mathcal H_\nu\oplus\Span\{\nu,e_y\}.
\]
From \eqref{eq:far-taylor-xx} and \eqref{eq:E-block-g}, its $\mathcal H_\nu$-block satisfies
\begin{equation}\label{eq:E-block}
 T_{\mathcal H_\nu}\le-c\left(\eps+\frac{Gy}{r}\right)I_{\mathcal H_\nu}
\end{equation}
for $M$ large. The $2\times2$ block $A$ on $\Span\{\nu,e_y\}$ has off-diagonal entry
\[
 G+O\left(\frac{Gy^2}{r^2}\right)
\]
and diagonal entries bounded by $C(\eps+Gy/r)$. Choose $c_0>0$ sufficiently small. Since $y\le2L$, taking $M$ large and imposing $\eps\le c_0G$ yields
\begin{equation}\label{eq:A-plane}
 \det A\le-\frac12G^2,
 \qquad
 \In A=(1,1),
 \qquad
 \|A^{-1}\|\le\frac CG.
\end{equation}
The mixed block from $\mathcal H_\nu$ into this plane is bounded by
\[
 C\frac{G(y+y^2)}{r^2}.
\]
The Schur correction to the $\mathcal H_\nu$-block has norm at most
\[
 C\frac{G(y+y^2)^2}{r^4}I_{\mathcal H_\nu},
\]
which is absorbed by \eqref{eq:E-block} uniformly for $0\le y\le2L$ once $M$ is large. Sylvester's inertia formula gives $(1,n)$ in this region.

Finally, \eqref{eq:G-far} and $r\simeq M$ imply $G\simeq\mu_\tau M^{-(n+2)}$. After decreasing $c_\infty$, condition \eqref{eq:eps-far} implies every smallness condition used above. The two regions overlap on $L\le y\le2L$, and the corresponding spectral gaps are bounded below uniformly in $\tau$. After decreasing $\delta_\infty$, they remain valid on an annulus of fixed width. Odd reflection across the exterior Dirichlet boundary gives a continuous extension of the Hessian estimates to the closure. Compactness after pullback then yields the lower bound $\delta^{\infty}_{M,\eps}>0$.
\end{proof}

\begin{lemma}\label{lem:far-point}
Let $D\subset\R^n$ be any nonempty bounded domain and let $W_D$ be the reflected extension of its half-Laplacian torsion function. Put $N=n+1$ and
\[
 \mu_D=c_n\int_Du_D(z)\,\dd z>0.
\]
Then, after fixing an origin in $\R^n$, there exists $R_D<\infty$ such that
\begin{equation}\label{eq:far-point-inertia}
 \In D^2W_D(0,R)=(1,n)
 \qquad(R\ge R_D).
\end{equation}
In particular, $\det D^2W_D$ is nonzero at some point of the upper half-space.
\end{lemma}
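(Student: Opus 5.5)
The plan is to use the far-field expansion \eqref{eq:far-field-expansion} along the vertical axis $x=0$, where the model Hessian is explicitly diagonal. No convexity or admissible-family uniformity is needed. Only boundedness of $D$ and positivity of $\mu_D$ enter, so the constants will depend on $D$.

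First, I would justify $\mu_D>0$ and $u_D\in L^1(D)$. By \cref{lem:variational}, $u_D\ge0$. Boundedness comes from domain monotonicity: with $D\subset B_\rho$ for some ball, $0\le u_D\le u_{B_\rho}$, and $u_{B_\rho}$ is bounded by the explicit ball formula. If $u_D\equiv0$, then \eqref{eq:weak} with any nonnegative $\zeta\in C_c^\infty(D)$, $\zeta\not\equiv0$, would give $0=\cE(0,\zeta)=\int_D\zeta>0$, a contradiction. Hence $\mu_D>0$. Alternatively, one can compare with a small ball $B\subset D$ and use monotonicity to get $u_D\ge u_B>0$ on $B$.

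Second, for $y>0$ with $X=(x,y)$ and $R=|X|$ larger than twice the diameter of $D\cup\{0\}$, I would expand the Poisson kernel $y(|x-z|^2+y^2)^{-N/2}$ in $z$ around $z=0$. Each $z$-derivative of the kernel gains a factor $R^{-1}$, so $U_D=\mu_D yR^{-N}+E$ with $|D^2E|\le CR^{-N-2}$. Here $C$ depends on the diameter of $D$ and on $\|u_D\|_{L^1}$. This is exactly the computation behind \eqref{eq:far-field-expansion}, now for a single domain. By \eqref{eq:hessian-same}, $D^2W_D=D^2U_D$ for $y>0$. At $X=(0,R)$ we have $\sigma=1$ and $v=e_y$. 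Then \eqref{eq:far-hessian-model} gives
\[
 D^2\Phi(0,R)=N\mu_D R^{-N-1}\diag(-I_n,n),
\]
whose eigenvalues have absolute value at least $N\mu_DR^{-N-1}$.

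Third, I would apply a Weyl eigenvalue perturbation argument. The perturbation satisfies $\|D^2E(0,R)\|\le CR^{-N-2}$, which is smaller than $\tfrac12N\mu_DR^{-N-1}$ once $R\ge R_D:=\max\{2C/(N\mu_D),\,2\operatorname{diam}(D\cup\{0\})\}$. Hence every eigenvalue keeps its sign, and $\In D^2W_D(0,R)=(1,n)$. In particular $\det D^2W_D(0,R)\ne0$.

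The only point needing care is that the error term really decays one order faster than the leading term at the Hessian level. This follows because the first-order term in the Taylor expansion contributes $O(R^{-N-2})$ after two derivatives. The rest is routine.
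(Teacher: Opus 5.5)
Your proposal is correct and is essentially the paper's own proof. Both arguments expand $U_D$ at $(0,R)$ as $\mu_D yR^{-N}$ plus an error with Hessian $O(R^{-N-2})$, read off the leading matrix $N\mu_D R^{-N-1}\diag(-I_n,n)$, and conclude by eigenvalue perturbation. Your added details fill in steps the paper leaves implicit: positivity of $\mu_D$ via the weak formulation, and the explicit remainder bound using $|X-(tz,0)|\ge R/2$.
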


\begin{proof}
The total mass $\mu_D$ is positive because $u_D\ge0$ and $u_D\not\equiv0$. Since $D$ is bounded, the far-field expansion \eqref{eq:far-field-expansion}, with constants depending on $D$, gives along $X_R=(0,R)$
\[
 U_D(X)=\mu_D y|X|^{-N}+E_D(X),
 \qquad
 |D^2E_D(X_R)|\le C_DR^{-N-2}.
\]
The affine term in $W_D=U_D+y$ has zero Hessian. A direct differentiation at $X_R$ yields
\begin{equation}\label{eq:far-point-matrix}
 D^2W_D(X_R)
 =N\mu_DR^{-N-1}
 \begin{pmatrix}
 -I_n&0\\
 0&n
 \end{pmatrix}
 +O(R^{-N-2}).
\end{equation}
The leading matrix has one positive and $n$ negative eigenvalues, all separated from zero by a multiple of $R^{-N-1}$. The error is smaller by a factor $R^{-1}$, so \eqref{eq:far-point-inertia} follows for sufficiently large $R$.
\end{proof}

\section{Explicit Poisson extension and Hessian inertia for the unit ball}\label{sec:ball}

Let $B=B_1(0)\subset\R^n$ and put
\begin{equation}\label{eq:kappa}
 \kappa_n=\frac{\Gamma(n/2)}{\sqrt\pi\,\Gamma((n+1)/2)}.
\end{equation}
The half-Laplacian torsion function of the ball is
\begin{equation}\label{eq:ball-torsion}
 u_B(r)=\kappa_n\sqrt{1-r^2},
 \qquad 0\le r<1.
\end{equation}
This normalization follows from \cite[Theorem~1, with $\alpha=1$ and $p=1/2$]{Dyda2012}, after accounting for the opposite sign convention for the fractional Laplacian, or directly from the extension computation below.

For $y>0$, introduce oblate spheroidal coordinates
\begin{equation}\label{eq:oblate}
 r^2=(1+\xi^2)(1-\eta^2),
 \qquad
 y=\xi\eta,
 \qquad
 \xi\ge0,
 \quad 0\le\eta\le1.
\end{equation}
These coordinates are adapted to the mixed boundary decomposition: $B\times\{0\}$ corresponds to $\xi=0$, whereas $(\R^n\setminus\overline B)\times\{0\}$ corresponds to $\eta=0$. Define
\begin{equation}\label{eq:fn}
 f_n(\xi)=\kappa_n\xi\int_\xi^\infty
 \frac{\dd s}{s^2(1+s^2)^{n/2}}.
\end{equation}

\begin{lemma}\label{lem:ball-extension}
The Poisson extension of \eqref{eq:ball-torsion} is
\begin{equation}\label{eq:ball-extension}
 U_B(r,y)=\eta f_n(\xi).
\end{equation}
Moreover,
\begin{equation}\label{eq:Kxi}
 K(\xi):=f_n(\xi)-\xi f_n'(\xi)
 =\kappa_n(1+\xi^2)^{-n/2}>0.
\end{equation}
\end{lemma}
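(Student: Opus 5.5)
We plan to check directly that $\Phi(r,y):=\eta f_n(\xi)$ is a bounded harmonic function in $\R^N_+$ with boundary trace $\kappa_n(1-r^2)_+^{1/2}$, and then identify it with $U_B$ by uniqueness for the Dirichlet problem in the half-space. Afterwards we verify the Neumann condition $\Phi_y=-1$ on $B\times\{0\}$; together with \eqref{eq:mixed-boundary} and the uniqueness part of \cref{lem:variational}, this also confirms the normalization \eqref{eq:ball-torsion}. The identity \eqref{eq:Kxi} is a one-line computation that we do first. Differentiating \eqref{eq:fn} gives
\[
 f_n'(\xi)=\kappa_n\int_\xi^\infty\frac{\dd s}{s^2(1+s^2)^{n/2}}-\frac{\kappa_n}{\xi(1+\xi^2)^{n/2}},
\]
so $f_n-\xi f_n'=\kappa_n(1+\xi^2)^{-n/2}$.

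\emph{Harmonicity.} We write the Laplacian of an axially symmetric function $F(r,y)$ in $\R^{n+1}$, namely $F_{rr}+\frac{n-1}{r}F_r+F_{yy}$, in the oblate coordinates \eqref{eq:oblate}. For a product $\eta f(\xi)$ this should reduce, after multiplying by the metric factor $\xi^2+\eta^2$, to a second-order ODE for $f$ with no residual $\eta$-dependence. The cleanest way to avoid computing this ODE explicitly is to note that $y=\xi\eta$ is harmonic, so $f=\xi$ is one solution. The ODE should have the divergence form
\[
 \bigl((1+\xi^2)^{n/2}f'\bigr)'=\text{(zeroth-order term)},
\]
with Wronskian weight $w(\xi)=(1+\xi^2)^{-n/2}$; this weight comes from the Jacobian factor $r^{n-1}$ in the $\xi$-direction. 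Reduction of order then produces the second solution $\xi\int_\xi^\infty s^{-2}(1+s^2)^{-n/2}\,\dd s$, which is exactly $f_n/\kappa_n$. I expect the main obstacle to be confirming this Wronskian weight, that is, the precise form of the separated ODE. If needed, I will derive it by computing $\Delta(\eta f(\xi))$ directly from the scale factors of \eqref{eq:oblate}.

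\emph{Boundary values, decay, uniqueness.} The exterior set $(\R^n\setminus\overline B)\times\{0\}$ corresponds to $\eta=0$, so $\Phi=0$ there. The disk $B\times\{0\}$ corresponds to $\xi=0$, where $\eta=\sqrt{1-r^2}$. Since $\xi\int_\xi^\infty s^{-2}(1+s^2)^{-n/2}\,\dd s\to1$ as $\xi\to0$, we get $f_n(0)=\kappa_n$, so the trace of $\Phi$ is $u_B$. As $|X|\to\infty$ we have $\xi\to\infty$ and $f_n(\xi)=O(\xi^{-n})$, so $\Phi\to0$. The function $\Phi$ is continuous up to $\{y=0\}$, including at the edge $r=1$. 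Hence $\Phi-U_B$ is bounded, harmonic, vanishes on $\{y=0\}$ and at infinity. The maximum principle then gives $\Phi=U_B$.

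\emph{Neumann condition and $\kappa_n$.} Near $\xi=0$ we expand
\[
 \int_\xi^\infty\frac{\dd s}{s^2(1+s^2)^{n/2}}=\frac1\xi-c_*+O(\xi),
 \qquad
 c_*=\int_0^\infty\frac{1-(1+s^2)^{-n/2}}{s^2}\,\dd s,
\]
which gives $f_n'(0)=-\kappa_nc_*$. On $\xi=0$ we have $\partial_y=\eta^{-1}\partial_\xi$, so $\Phi_y=f_n'(0)=-\kappa_nc_*$. Evaluating $c_*$ by integration by parts and a Beta integral yields
\[
 c_*=\frac{\sqrt\pi\,\Gamma((n+1)/2)}{\Gamma(n/2)}=\kappa_n^{-1},
\]
so $\Phi_y=-1$ on $B\times\{0\}$, as required.
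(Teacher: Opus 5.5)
Your proposal is correct and follows essentially the paper's route: separated variables in oblate coordinates, trace $f_n(0)=\kappa_n$, decay at infinity, uniqueness of the bounded Poisson extension, $f_n'(0)=-1$ via integration by parts and a Beta integral, and \eqref{eq:Kxi} by direct differentiation. The step you left open is confirmed by the paper's formula $\Delta(\eta f(\xi))=\frac{\eta}{\xi^2+\eta^2}\bigl((1+\xi^2)f''+n\xi f'-nf\bigr)$: this ODE has $f=\xi$ as a solution and Wronskian $(1+\xi^2)^{-n/2}$, so your reduction of order produces $f_n/\kappa_n$ exactly as claimed.
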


\begin{proof}
For an axisymmetric function of the separated form $U=\eta f(\xi)$, substitution of \eqref{eq:oblate} into the Euclidean Laplacian gives
\[
 \Delta U
 =\frac{\eta}{\xi^2+\eta^2}
 \left((1+\xi^2)f''+n\xi f'-nf\right).
\]
Differentiating \eqref{eq:fn} verifies the resulting ODE. To check the boundary values and normal derivative, integrate by parts:
\[
 \int_\xi^\infty\frac{\dd s}{s^2(1+s^2)^{n/2}}
 =\frac{(1+\xi^2)^{-n/2}}{\xi}
 -n\int_\xi^\infty(1+s^2)^{-(n+2)/2}\,\dd s.
\]
The beta integral at $\xi=0$ yields $f_n(0)=\kappa_n$ and $f_n'(0)=-1$. Hence \eqref{eq:ball-extension} is harmonic in the upper half-space, has boundary trace $u_B$ on $\R^n$, and tends to zero at infinity. Uniqueness of the Poisson extension identifies it with $U_B$. Its normal derivative is $-1$ on the interior disk, in agreement with \eqref{eq:torsion}. Differentiating \eqref{eq:fn} also gives \eqref{eq:Kxi}.
\end{proof}

Put $\delta_{\mathrm{obl}}=\xi^2+\eta^2$. The inverse coordinate derivatives are
\begin{equation}\label{eq:oblate-derivatives}
 \partial_r
 =\frac{\xi r}{\delta_{\mathrm{obl}}}\partial_\xi
 -\frac{\eta r}{\delta_{\mathrm{obl}}}\partial_\eta,
 \qquad
 \partial_y
 =\frac{\eta(1+\xi^2)}{\delta_{\mathrm{obl}}}\partial_\xi
 +\frac{\xi(1-\eta^2)}{\delta_{\mathrm{obl}}}\partial_\eta.
\end{equation}
Because $U_B$ is radial in the $x$ variables, every direction tangent to a horizontal sphere is an eigenvector of its Hessian. The corresponding $n-1$ angular eigenvalues are
\begin{equation}\label{eq:ball-angular}
 \lambda_{\mathrm{ang}}
 =\frac{(U_B)_r}{r}
 =-\frac{\eta K(\xi)}{\delta_{\mathrm{obl}}}<0.
\end{equation}
For the $(r,y)$-Hessian block
\[
 A_0=\begin{pmatrix}
 (U_B)_{rr}&(U_B)_{ry}\\
 (U_B)_{ry}&(U_B)_{yy}
 \end{pmatrix},
\]
direct differentiation, followed by elimination of $f_n''$ through the ODE, gives
\begin{equation}\label{eq:ball-plane-det}
 \det A_0
 =-\frac{(1+\xi^2)K(\xi)^2
 \{1+(n-1)\eta^2\}}{\delta_{\mathrm{obl}}^3}<0.
\end{equation}
The algebra leading to \eqref{eq:ball-plane-det} is recorded in \cref{app:ball-algebra}.

\begin{proposition}\label{prop:ball-inertia}
For every $n\ge2$,
\begin{equation}\label{eq:ball-inertia}
 \In D^2W_B=(1,n)
 \qquad\text{throughout }\cG_B.
\end{equation}
For every fixed $M>2$, there exists $\eps_B(M)>0$ such that
\begin{equation}\label{eq:perturbed-ball}
 \In D^2(W_B+\eps Q_n)=(1,n)
 \qquad\text{in }\cG_B\cap B_M^N
\end{equation}
whenever $0\le\eps\le\eps_B(M)$.
\end{proposition}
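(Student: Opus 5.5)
The plan has two parts. First I prove the unperturbed inertia \eqref{eq:ball-inertia} pointwise from the explicit formulas. Then I obtain \eqref{eq:perturbed-ball} by a compactness and perturbation argument, patched with the edge, Dirichlet-boundary and far-field propositions near the places where compactness fails.

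\emph{Step 1 (upper half-space).} For $y>0$, \eqref{eq:hessian-same} gives $D^2W_B=D^2U_B$. Since $U_B$ is radial in $x$, at a point with $r=|x|>0$ I split $\R^N$ into the $(n-1)$-dimensional space tangent to the horizontal sphere and the plane $\Span\{\omega,e_y\}$ with $\omega=x/r$.
\begin{itemize}
\item On the tangent space the Hessian equals $\lambda_{\mathrm{ang}}I$. By \eqref{eq:ball-angular} and \eqref{eq:Kxi}, $\lambda_{\mathrm{ang}}<0$ whenever $\eta>0$, i.e.\ $y>0$.
\item On the plane the block is $A_0$, and $\det A_0<0$ by \eqref{eq:ball-plane-det}, so $\In A_0=(1,1)$.
\end{itemize}
Together these give inertia $(1,n)$. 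At $r=0$ I pass to the limit, or compute directly: the Hessian is $\diag(\lambda I_n,(U_B)_{yy})$ with $\lambda=\lim(U_B)_r/r<0$ and $(U_B)_{yy}=-n\lambda>0$ by harmonicity.

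For $y<0$, even reflection conjugates the Hessian by $\diag(I_n,-1)$, which preserves inertia.

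\emph{Step 2 (the disk $B\times\{0\}$).} This is the set $\xi=0$, $0<\eta\le1$. Here $W_B$ is smooth, and formulas \eqref{eq:ball-angular}, \eqref{eq:ball-plane-det} extend continuously in the oblate coordinates with $\delta_{\mathrm{obl}}=\eta^2>0$:
\[
\lambda_{\mathrm{ang}}=-K(0)/\eta<0,\qquad \det A_0=-K(0)^2(1+(n-1)\eta^2)/\eta^6<0.
\]
Since $W_B$ is smooth across the disk, the limits of $D^2U_B$ from $y>0$ equal $D^2W_B$ there. Alternatively I can use \eqref{eq:bottom-block} with $D_x^2u_B<0$, which is explicit from \eqref{eq:ball-torsion}, and $(W_B)_{yy}=-\Delta_xu_B>0$. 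This proves \eqref{eq:ball-inertia} on all of $\cG_B$.

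\emph{Step 3 (perturbation).} Fix $M>2$ and apply \cref{prop:edge-inertia}, \cref{prop:dirichlet-boundary} and \cref{prop:large-sphere} to the constant family $D_\tau\equiv B$, which is admissible.

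The main obstacle is that $|\det D^2W_B|$ is not bounded below uniformly on $\cG_B\cap B_M^N$. It degenerates along the exterior slit, where $\eta\to0$ makes $\lambda_{\mathrm{ang}}\to0$, and it blows up at the edge. So smallness of $\eps$ alone does not suffice near these sets, and I treat three regions separately.
\begin{itemize}
\item \emph{Near the edge.} \cref{prop:edge-inertia} gives inertia $(1,n)$ only for $\rho<c_e\eps^2$, a shrinking region, so it must be supplemented. For $c_e\eps^2\le\rho<\rho_0$, I use the explicit structure directly. In $D^2(W_B+\eps Q_n)$, the angular block is $(\lambda_{\mathrm{ang}}-\eps)I$, which stays negative because $\eps Q_n$ adds $-\eps$ to it. The plane block is $A_0+\eps\diag(-1,n)$, and its determinant is $\det A_0+O(\eps\|A_0\|+\eps^2)$. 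Near the edge one has $\|A_0\|\simeq\rho^{-3/2}$ and $|\det A_0|\simeq\rho^{-3}$ (this is \eqref{eq:h-det} applied to the expansion), so the determinant stays negative for $\eps\le\eps_B(M)$ uniformly in $\rho<\rho_0$.
\item \emph{Along the exterior slit, away from the edge.} The same observation works for all points with $|x|>1$: the angular eigenvalue only becomes more negative, and the plane block has $\det A_0<0$ with $|\det A_0|$ bounded below near $\eta=0$ on compacta, because there the off-diagonal entry $\approx|\nabla g|\ne0$. This matches the mechanism of \cref{prop:dirichlet-boundary}.
\item \emph{The remaining compact part, and infinity.} On the remaining compact subset of $\cG_B\cap\overline{B_M^N}$, which avoids both the edge neighbourhood and the slit, $|\det D^2W_B|$ and the spectral gap are bounded below by continuity, and a small $\eps$ preserves the inertia. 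The far field needs nothing, since the region is truncated at $|X|\le M$.
\end{itemize}
Taking $\eps_B(M)$ to be the minimum of the finitely many thresholds concludes the proof.
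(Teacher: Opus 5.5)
Your proposal is correct and is essentially the paper's argument. The same angular/plane splitting via \eqref{eq:ball-angular} and \eqref{eq:ball-plane-det} gives the unperturbed inertia. For the perturbation, the angular eigenvalues only move by $-\eps$, while $\det\bigl(A_0+\eps\diag(-1,n)\bigr)\le\det A_0+(n+1)\eps\|A_0\|$ stays negative because $\|A_0\|/|\det A_0|$ is of order $\rho^{3/2}$ near the edge and bounded near the exterior slit. The paper packages this as boundedness of the single ratio $\mathcal R_B$ on the compact meridional set $\mathcal M_M$ rather than your three regions.

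Two small remarks. First, your appeals to \cref{prop:edge-inertia,prop:dirichlet-boundary,prop:large-sphere} are unnecessary: your explicit estimate already covers all $0<\rho<\rho_0$, and \cref{prop:large-sphere} would not apply for $M<M_0$ anyway. Second, the lower bound for $|\det A_0|$ at the exterior slit, i.e.\ $G_B'\neq0$, should be justified. It follows directly from \eqref{eq:ball-plane-det}, which gives $-\det A_0=(1+\xi^2)K(\xi)^2/\xi^6>0$ at $\eta=0$.
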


\begin{proof}
For $y>0$, the Hessian splits orthogonally into the $n-1$ angular directions and the plane spanned by the horizontal radial direction and $e_y$. Equation \eqref{eq:ball-angular} makes the angular eigenvalues strictly negative, while \eqref{eq:ball-plane-det} gives a negative determinant on the two-dimensional plane and hence one positive and one negative eigenvalue there. At $y<0$, reflection conjugates the Hessian by $\diag(I_n,-1)$. On the interior trace set $B\times\{0\}$, $W_B$ is smooth and even in $y$; hence the mixed entries vanish, $D_x^2u_B<0$, and
\[
 (W_B)_{yy}=-\Delta_xu_B>0.
\]
Thus \eqref{eq:ball-inertia} holds throughout the slit domain, including $B\times\{0\}$.

We turn to the perturbation. The angular eigenvalues become $\lambda_{\mathrm{ang}}-\eps<0$, and the $(r,y)$-Hessian block is
\[
 A_\eps=\begin{pmatrix}
 (U_B)_{rr}-\eps&(U_B)_{ry}\\
 (U_B)_{ry}&(U_B)_{yy}+n\eps
 \end{pmatrix}.
\]
Consequently,
\begin{equation}\label{eq:ball-det-eps}
 \det A_\eps
 =\det A_0+\eps\{n(U_B)_{rr}-(U_B)_{yy}\}-n\eps^2.
\end{equation}
Define, for $y>0$,
\[
 \mathcal R_B(r,y)
 :=\frac{\{n(U_B)_{rr}-(U_B)_{yy}\}^+}{-\det A_0},
 \qquad
 \mathcal M_M:=\{(r,y):r\ge0,\ y\ge0,\ r^2+y^2\le M^2\}.
\]
We show that $\mathcal R_B$ has a bounded one-sided extension to the compact meridional set $\mathcal M_M$.

On $y=0$, $0\le r<1$, the limiting block is
\[
 \diag\bigl(u_B''(r),-\Delta_xu_B(r)\bigr),
\]
so its determinant is strictly negative. On $y=0$, $r>1$, odd reflection gives
\[
 U_B(r,y)=yG_B(r)+O(y^3),
 \qquad
 G_B(r)=\frac{f_n(\sqrt{r^2-1})}{\sqrt{r^2-1}}.
\]
Writing $\xi=(r^2-1)^{1/2}$ and using \eqref{eq:Kxi}, one obtains
\[
 \frac{\dd}{\dd\xi}\left(\frac{f_n(\xi)}{\xi}\right)
 =-\frac{K(\xi)}{\xi^2}<0;
\]
hence $G_B'(r)<0$. The limiting $(r,y)$-Hessian block is
\[
 \begin{pmatrix}0&G_B'(r)\\G_B'(r)&0\end{pmatrix},
\]
and its determinant equals $-(G_B')^2<0$, while both diagonal entries tend to zero. Radial smoothness yields a finite limit of $\mathcal R_B$ as $r\downarrow0$. Near the slit edge, put
\[
 \rho_B=((r-1)^2+y^2)^{1/2}.
\]
Equation \eqref{eq:ball-plane-det} gives $-\det A_0\simeq\rho_B^{-3}$, whereas differentiation of the leading square-root term gives
\[
 |(U_B)_{rr}|+|(U_B)_{yy}|=O(\rho_B^{-3/2});
\]
hence $\mathcal R_B\to0$ at the edge. On the remaining portions of $\mathcal M_M$, the quotient and its one-sided boundary limits are continuous. Therefore
\[
 C_M:=\sup_{\mathcal M_M}\mathcal R_B<\infty,
\]
where $\mathcal R_B$ denotes the extension just described.

Choose
\[
 \eps_B(M)\le\min\left\{1,\frac{1}{2C_M}\right\},
\]
with the second restriction omitted if $C_M=0$. Then \eqref{eq:ball-det-eps} yields
\[
 \det A_\eps
 \le-\bigl(1-\eps C_M\bigr)(-\det A_0)-n\eps^2<0.
\]
Thus $A_\eps$ has inertia $(1,1)$, and together with the negative angular eigenvalues this proves \eqref{eq:perturbed-ball}.
\end{proof}

\section[C2-stability under smooth domain deformations]{\texorpdfstring{$C^2$}{C2}-stability under smooth domain deformations}\label{sec:domain-stability}

To compare Hessians corresponding to different parameters, we identify the domains by diffeomorphisms and distinguish the pullback Hessian from the Euclidean Hessian of the pullback.

\begin{proposition}\label{prop:C2-continuity}
Let $\{D_\tau\}$ be an admissible family. There exists $R_*>0$ with the following property. For every fixed $\tau_*\in[0,1]$, there are diffeomorphisms $\Psi_\tau:\R^N\to\R^N$, defined for $\tau$ near $\tau_*$, such that $\Psi_{\tau_*}=\Id$, $\Psi_\tau(\cG_{D_{\tau_*}})=\cG_{D_\tau}$, $\Psi_\tau=\Id$ on $(\R^n\setminus B_{R_*}^n)\times\R$, and
\begin{equation}\label{eq:original-hessian-continuity}
 D^2W_{D_\tau}\circ\Psi_\tau
 \longrightarrow D^2W_{D_{\tau_*}}
 \quad\text{uniformly on }K
\end{equation}
for every compact set $K\Subset\cG_{D_{\tau_*}}$ satisfying
\[
 \dist(K,\partial D_{\tau_*}\times\{0\})>0.
\]
The corresponding one-sided convergence holds up to compact subsets of $(\R^n\setminus\overline{D_{\tau_*}})\times\{0\}$. The same assertions hold with $W_{D_\tau}$ replaced by $V_{D_\tau,\eps}$ for fixed $\eps\ge0$.
\end{proposition}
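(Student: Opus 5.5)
We construct $\Psi_\tau$ from the boundary charts in (F4) and then prove convergence of the pulled-back functions on compact sets away from the edge. First we build the diffeomorphisms. By (F4), near $\tau_*$ each $\partial D_\tau$ is a normal graph $s=\phi_\tau(z)$ over $\partial D_{\tau_*}$ in the Fermi coordinates \eqref{eq:fermi}, with $\phi_{\tau_*}=0$ and $\phi_\tau\to0$ in $C^{6,\alpha}$. Let $\chi$ be a cutoff supported in the tubular neighbourhood and equal to $1$ near $\partial D_{\tau_*}$. Define $\psi_\tau(x)=x+\chi(x)\phi_\tau(z(x))\nu_{\mathrm{in},\tau_*}(z(x))$. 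For $\tau$ close to $\tau_*$ this is a diffeomorphism of $\R^n$ that maps $D_{\tau_*}$ onto $D_\tau$ and is the identity outside $B^n_{R_*}$, where $B_+\Subset B^n_{R_*}$. Set $\Psi_\tau(x,y)=(\psi_\tau(x),y)$. Then $\Psi_\tau$ maps the slit $D_{\tau_*}^c\times\{0\}$ onto $D_\tau^c\times\{0\}$, hence $\cG_{D_{\tau_*}}$ onto $\cG_{D_\tau}$, and $\Psi_\tau\to\Id$ in $C^{5}$.

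Next we prove convergence of the pullbacks $\widetilde W_\tau=W_{D_\tau}\circ\Psi_\tau$. These solve $L_\tau\widetilde W_\tau=0$ in $\cG_{D_{\tau_*}}$, where $L_\tau$ is the pulled-back Laplacian and its coefficients converge to those of $\Delta$ in $C^{3,\alpha}$. They vanish on the fixed slit, are even in $y$, and are uniformly bounded on compact sets by the comparison $0\le u_{D_\tau}\le u_{B_R}$ used in \cref{prop:uniform-edge}. They are uniformly $C^{1/2}$ near the edge by the expansion \eqref{eq:edge-expansion}, which is uniform in $\tau$. Interior Schauder estimates, and boundary Schauder estimates at the flat Dirichlet faces, then give uniform $C^{3,\alpha}$ bounds on every compact $K$ at positive distance from the edge, including one-sided bounds up to the exterior faces. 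By Arzel\`a--Ascoli, every sequence $\tau_k\to\tau_*$ has a subsequence along which $\widetilde W_{\tau_k}$ converges in $C^2$ locally away from the edge, and in $C^0$ locally everywhere, to some $\widetilde W$. The limit is harmonic in $\cG_{D_{\tau_*}}$, vanishes on the slit, is even and continuous, and satisfies $\widetilde W-|y|\to0$ at infinity. The last property holds because $u_{D_\tau}\le u_{B_R}$ gives a uniform decay bound $U_{D_\tau}\le CyR^{-N}$.

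We then identify the limit. The plan is to pass to the limit in the weak formulation. The functions $u_{D_\tau}\circ\psi_\tau$ are bounded in $H^{1/2}$: the energy identity gives $\cE(u_{D_\tau},u_{D_\tau})=\int u_{D_\tau}$, which is uniformly bounded, and $\psi_\tau$ is bi-Lipschitz uniformly in $\tau$. Take a test function $\zeta\in C_c^\infty(D_{\tau_*})$ and push it forward by $\psi_\tau$. The two sides of \eqref{eq:weak} then converge, so the weak limit solves the problem on $D_{\tau_*}$. Uniqueness in \cref{lem:variational} gives $\widetilde W=W_{D_{\tau_*}}$, so the whole family converges and no subsequence is needed.

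Finally we pass from pulled-back derivatives to Euclidean ones. By the chain rule,
\[
D^2W_{D_\tau}\circ\Psi_\tau=(D\Psi_\tau)^{-\mathsf T}\bigl(D^2\widetilde W_\tau-\textstyle\sum_k(\partial_k W_{D_\tau}\circ\Psi_\tau)\,D^2\Psi_\tau^{k}\bigr)(D\Psi_\tau)^{-1}.
\]
Since $D\Psi_\tau\to\Id$ and $D^2\Psi_\tau\to0$, this converges uniformly on $K$ to $D^2W_{D_{\tau_*}}$, one-sidedly up to compact parts of the exterior faces. The claim for $V_{D_\tau,\eps}$ follows because $\eps D^2Q_n$ is constant. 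The main obstacle is the identification step in the presence of the nonlocal exterior condition. If passing to the limit in the weak formulation proves delicate, the fallback is to use the mixed problem \eqref{eq:mixed-boundary}. In the limit, the Neumann condition on $D_{\tau_*}\times\{0\}$ is preserved because even reflection across $D_{\tau_*}\times\{0\}$ remains harmonic, the Dirichlet condition on the exterior face is preserved, and the limit has the $C^{1/2}$ edge behaviour and the decay at infinity. Uniqueness for this mixed problem then follows from the energy argument.
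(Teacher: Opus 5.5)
Your proposal is correct, but it gets the convergence of the pullbacks by a different mechanism from the paper's. The paper uses the same diffeomorphisms $\Psi_\tau(z,y)=(F_\tau(z),y)$. It then pulls back the \emph{extension} problem to the fixed space $\mathscr H_{\tau_*}$, which gives a divergence-form problem with uniformly elliptic coefficients $A_\tau$ and boundary density $J_\tau=\det DF_\tau$. Testing the difference of the problems for $\tau$ and $\sigma$ with $w=\widetilde U_\tau-\widetilde U_\sigma$ yields directly $\|\nabla w\|_{L^2(\R^N_+)}\le C(\|A_\tau-A_\sigma\|_{L^\infty}+\|J_\tau-J_\sigma\|_{L^\infty})$. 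Schauder estimates, after even or odd reflection, then upgrade the resulting local $L^2$ convergence to $C^{2,\beta}$ convergence on $K$, and the Hessian transformation formula is inverted as you do. That route is quantitative, needs neither a compactness argument nor the uniqueness in \cref{lem:variational}, and uses no information at the edge. You argue instead by compactness and uniqueness, identifying subsequential limits through the nonlocal form on $\R^n$. This avoids the trace space $\mathscr H_{\tau_*}$ and the pulled-back extension problem, but it is only qualitative and calls on \cref{prop:uniform-edge} for equicontinuity up to the edge. When you write it out, make explicit the two facts behind ``the two sides converge.'' First, $u_{D_\tau}\circ\psi_\tau\in H^{1/2}_0(D_{\tau_*})$, because composition with a bi-Lipschitz map carrying $D_{\tau_*}$ onto $D_\tau$ sends $C_c^\infty(D_\tau)$ to Lipschitz functions compactly supported in $D_{\tau_*}$; hence the weak limit lies in this weakly closed subspace. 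Second, the pulled-back kernel ratio $\det D\psi_\tau(x)\det D\psi_\tau(z)\,|x-z|^{N}|\psi_\tau(x)-\psi_\tau(z)|^{-N}$ tends to $1$ uniformly on $\R^n\times\R^n$, so $\cE(u_{D_\tau},\zeta\circ\psi_\tau^{-1})-\cE(u_{D_\tau}\circ\psi_\tau,\zeta)\to0$ by the uniform $H^{1/2}$ bound. The edge expansion is in fact dispensable for the identification. Rellich on a fixed ball upgrades the weak limit to $u_{D_\tau}\circ\psi_\tau\to u_{D_{\tau_*}}$ in $L^2$, hence $u_{D_\tau}\to u_{D_{\tau_*}}$ in $L^1$, and the Poisson integral then identifies $\widetilde W-y$ with $U_{D_{\tau_*}}$ in the open upper half-space.
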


\begin{proof}
For $\tau$ sufficiently close to $\tau_*$, the hypersurface $\partial D_\tau$ is a normal graph over $\partial D_{\tau_*}$. Extending the normal graph map by a smooth cut-off supported in the common tubular neighborhood produces orientation-preserving diffeomorphisms
\[
 F_\tau:\R^n\to\R^n,
 \qquad
 F_\tau(D_{\tau_*})=D_\tau,
\]
for which there is a constant $R_*>0$, independent of $\tau_*$, such that $F_\tau=\Id$ on $\R^n\setminus B_{R_*}^n$. They converge to $F_{\tau_*}=\Id$ in $C^{4,\alpha}$ as $\tau\to\tau_*$. Set
\[
 \Psi_\tau(z,y)=(F_\tau(z),y),
 \qquad
 \widetilde U_\tau=U_{D_\tau}\circ\Psi_\tau,
 \qquad
 \widetilde W_\tau=W_{D_\tau}\circ\Psi_\tau.
\]
The map $\Psi_\tau$ sends the fixed slit onto the moving slit. Since $N\ge3$, define
\[
 \dot H^1(\R^N_+)
 :=\{v\in L^{2N/(N-2)}(\R^N_+):\nabla v\in L^2(\R^N_+)\},
\]
with norm $\|\nabla v\|_{L^2}$. Pullback identifies the moving variational spaces with
\[
 \mathscr H_{\tau_*}
 =\{v\in\dot H^1(\R^N_+):
 \operatorname{Tr}v\in H^{1/2}_0(D_{\tau_*})\}.
\]
Writing $J_\tau=\det DF_\tau$, the weak extension problem becomes
\begin{equation}\label{eq:pullback-weak}
 \int_{\R^N_+}A_\tau\nabla\widetilde U_\tau\cdot\nabla\zeta
 =\int_{D_{\tau_*}}J_\tau(z)\zeta(z,0)\,\dd z,
 \qquad \zeta\in\mathscr H_{\tau_*},
\end{equation}
where
\begin{equation}\label{eq:pullback-A}
 A_\tau
 =J_\tau
 \begin{pmatrix}
 DF_\tau^{-1}(DF_\tau)^{-\mathsf T}&0\\
 0&1
 \end{pmatrix}.
\end{equation}
The matrices $A_\tau$ are uniformly elliptic because $DF_\tau$ and its inverse are uniformly bounded. Testing \eqref{eq:pullback-weak} with $\widetilde U_\tau$ and using the trace Sobolev inequality gives a uniform energy bound.

To prove continuity, put $w=\widetilde U_\tau-\widetilde U_\sigma$, subtract the two weak equations, and use $w$ itself as the test function. Uniform ellipticity, the trace Sobolev inequality, and the uniform energy bound give
\begin{align*}
 c\|\nabla w\|_{L^2}^2
 &\le
 \left|\int_{\R^N_+}(A_\sigma-A_\tau)
 \nabla\widetilde U_\sigma\cdot\nabla w\right|
 +\left|\int_{D_{\tau_*}}(J_\tau-J_\sigma)w(z,0)\,\dd z\right|\\
 &\le C\bigl(\|A_\tau-A_\sigma\|_{L^\infty}
 +\|J_\tau-J_\sigma\|_{L^\infty}\bigr)
 \|\nabla w\|_{L^2}.
\end{align*}
Consequently,
\begin{equation}\label{eq:energy-shape-continuity}
 \|\nabla(\widetilde U_\tau-\widetilde U_\sigma)\|_{L^2(\R^N_+)}
 \longrightarrow0
 \qquad(\tau\to\sigma).
\end{equation}

Let $K\Subset\cG_{D_{\tau_*}}$ satisfy $\dist(K,\partial D_{\tau_*}\times\{0\})>0$. On subsets of $\{y\ne0\}$, interior estimates apply directly. Near $D_{\tau_*}\times\{0\}$, use the even reflection of $\widetilde W_\tau$; near $(\R^n\setminus\overline{D_{\tau_*}})\times\{0\}$, use the odd reflection of $\widetilde U_\tau$. The matrix in \eqref{eq:pullback-A} is block diagonal and independent of $y$, so the reflected functions satisfy uniformly elliptic divergence-form equations with uniform $C^{2,\alpha}$ coefficient bounds. The local $L^2$ convergence following from \eqref{eq:energy-shape-continuity}, combined with interior or flat Dirichlet Schauder estimates, gives
\[
 \widetilde W_\tau\longrightarrow\widetilde W_{\tau_*}
 \quad\text{in }C^{2,\beta}(K)
 \qquad(0<\beta<\alpha),
\]
with the analogous one-sided convergence up to the exterior Dirichlet boundary.

Finally,
\begin{equation}\label{eq:hessian-transform}
 D^2(W_{D_\tau}\circ\Psi_\tau)
 =D\Psi_\tau^{\mathsf T}(D^2W_{D_\tau}\circ\Psi_\tau)D\Psi_\tau
 +\sum_{a=1}^{N}
 (\partial_aW_{D_\tau}\circ\Psi_\tau)D^2\Psi_\tau^a.
\end{equation}
The second term records the failure of the Euclidean Hessian to transform tensorially under nonlinear changes of variables. The convergence of $\widetilde W_\tau$ in $C^{2,\beta}$, the convergence of $\Psi_\tau$ in $C^2$, and uniform invertibility of $D\Psi_\tau$ allow \eqref{eq:hessian-transform} to be solved for $D^2W_{D_\tau}\circ\Psi_\tau$. This proves \eqref{eq:original-hessian-continuity} and the one-sided boundary statement.
\end{proof}

\section{Continuation argument for smooth uniformly convex domains}\label{sec:global}

Increase $M_0$, if necessary, and fix $M\ge M_0$ such that $M>R_*+2$ and
\[
 \overline{D_\tau}\times\{0\}\subset B_{M-2}^N
 \qquad(0\le\tau\le1).
\]
Choose
\begin{equation}\label{eq:epsilon-choice}
 0<\eps\le
 \min\{1,\eps_B(M),c_\infty\mu_-M^{-(n+2)}\}.
\end{equation}
For this fixed pair $(M,\eps)$, define
\begin{equation}\label{eq:Tset}
 \cT_{M,\eps}
 =\left\{\tau\in[0,1]:
 \In D^2V_{D_\tau,\eps}=(1,n)
 \text{ at every point of }
 \cG_{D_\tau}\cap B_M^N
 \right\}.
\end{equation}
By \cref{prop:ball-inertia}, $0\in\cT_{M,\eps}$.

\subsection{Boundary nondegeneracy and the interior subdomain}

Write
\[
 \Omega_\tau^M:=\cG_{D_\tau}\cap B_M^N.
\]
For each of the slit edge, the exterior Dirichlet boundary away from the edge, and the truncation sphere, choose two ambient neighborhoods
\[
 \mathcal U_{\tau,\bullet}^-\Subset\mathcal U_{\tau,\bullet}^+,
 \qquad \bullet\in\{\mathrm e,\mathrm D,\infty\},
\]
such that $\overline{\mathcal U_{\tau,\bullet}^+}\cap\Omega_\tau^M$ is contained in the corresponding region on which the conclusion of \cref{prop:edge-inertia,prop:dirichlet-boundary,prop:large-sphere} holds, with one-sided interpretation at the exterior Dirichlet boundary. Choose them so that their smaller union covers the relative boundary of $\Omega_\tau^M$. They may be chosen to vary continuously after pullback by $\Psi_\tau$. Put
\[
 \mathcal U_\tau^\pm
 :=\bigcup_{\bullet\in\{\mathrm e,\mathrm D,\infty\}}
 \mathcal U_{\tau,\bullet}^\pm.
\]
Then
\begin{equation}\label{eq:boundary-neighborhood-cover}
 \partial\Omega_\tau^M\subset\mathcal U_\tau^-
 \Subset\mathcal U_\tau^+,
 \qquad
 \Omega_\tau^M\setminus\mathcal U_\tau^+
 \Subset\Omega_\tau^M\setminus\overline{\mathcal U_\tau^-}.
\end{equation}
The three propositions and compactness in the parameter $\tau$ give a constant
\begin{equation}\label{eq:boundary-det-delta}
 \delta_{M,\eps}>0
\end{equation}
such that, for every $\tau\in[0,1]$,
\begin{equation}\label{eq:boundary-det-lower}
 \In D^2V_{D_\tau,\eps}=(1,n),
 \qquad
 |\det D^2V_{D_\tau,\eps}|
 \ge\delta_{M,\eps}
\end{equation}
on $\overline{\mathcal U_\tau^+}\cap\Omega_\tau^M$, with one-sided interpretation on the exterior Dirichlet boundary.

Choose a bounded open set $\cK_\tau$, possibly with finitely many connected components and with piecewise $C^1$ boundary, such that
\begin{equation}\label{eq:core-sandwich}
 \Omega_\tau^M\setminus\mathcal U_\tau^+
 \subset\cK_\tau
 \subset\overline{\cK_\tau}
 \subset\Omega_\tau^M\setminus\overline{\mathcal U_\tau^-}.
\end{equation}
Smoothing finitely many intersections of level sets gives such an open set. The construction can be made with defining functions that depend continuously on $\tau$ after pullback and whose zero level sets are regular. Consequently, for every $\tau_*$, the sets $\Psi_\tau^{-1}(\overline{\cK_\tau})$ converge to $\overline{\cK_{\tau_*}}$ in the Hausdorff topology as $\tau\to\tau_*$. In particular, they are contained in a fixed compact subset of $\Omega_{\tau_*}^M$ when $\tau$ is close to $\tau_*$, and every limit point of a sequence in $\Psi_\tau^{-1}(\overline{\cK_\tau})$ belongs to $\overline{\cK_{\tau_*}}$. Moreover,
\[
 \partial\cK_\tau\subset\mathcal U_\tau^+\setminus
 \overline{\mathcal U_\tau^-},
\]
and \eqref{eq:boundary-det-lower} holds on $\partial\cK_\tau$.

\begin{lemma}\label{lem:interior-det-lower}
If $\tau\in\cT_{M,\eps}$, then
\begin{equation}\label{eq:interior-det-lower}
 |\det D^2V_{D_\tau,\eps}|
 \ge\delta_{M,\eps}
 \qquad\text{on }\overline{\cK_\tau}.
\end{equation}
\end{lemma}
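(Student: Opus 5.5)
The plan is to apply \cref{thm:det-min} to $q=-V_{D_\tau,\eps}$ on a neighborhood of $\overline{\cK_\tau}$. We then transfer the boundary bound \eqref{eq:boundary-det-lower} on $\partial\cK_\tau$ to the interior through the minimum principle \eqref{eq:det-minimum}.

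\emph{Step 1: hypotheses of \cref{thm:det-min}.} Since $\tau\in\cT_{M,\eps}$, we have $\In D^2V_{D_\tau,\eps}=(1,n)$ at every point of $\Omega_\tau^M$. Hence $q=-V_{D_\tau,\eps}$ satisfies $n_-(D^2q)=1$ and $n_0(D^2q)=0$ throughout $\Omega_\tau^M$. Moreover, $q$ is harmonic and $C^\infty$ on $\Omega_\tau^M$, because $W_{D_\tau}$ is harmonic in $\cG_{D_\tau}$ by \eqref{eq:slit-domain} and $Q_n$ is a harmonic polynomial. Also $|\det D^2q|=|\det D^2V_{D_\tau,\eps}|$, since $N=n+1$ and the sign $(-1)^N$ disappears under the absolute value.

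\emph{Step 2: applying the minimum principle.} Set $\cO=\Omega_\tau^M$. This set is open; by \eqref{eq:core-sandwich}, $\overline{\cK_\tau}\subset\cO$ is compact, bounded, and has piecewise $C^1$ boundary. \Cref{thm:det-min} requires a connected domain. If $\Omega_\tau^M$ is not connected, we pass to the connected component of $\Omega_\tau^M$ containing a given component $\cK'$ of $\cK_\tau$. There are only finitely many such $\cK'$, and each satisfies $\cK'\Subset\cO$ with $\partial\cK'\subset\partial\cK_\tau$. In fact \eqref{eq:logdet-super} is a pointwise statement, so connectedness of $\cO$ plays no role; the weak minimum principle for the continuous superharmonic function $\log|\det D^2q|$ on the bounded open set $\cK'$ is all that is needed. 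Applying \eqref{eq:det-minimum} on each component $\cK'$ gives
\[
 \inf_{\cK'}|\det D^2V_{D_\tau,\eps}|\ge\inf_{\partial\cK'}|\det D^2V_{D_\tau,\eps}|\ge\delta_{M,\eps},
\]
where the last inequality uses $\partial\cK_\tau\subset\mathcal U_\tau^+\setminus\overline{\mathcal U_\tau^-}$, so that \eqref{eq:boundary-det-lower} holds on $\partial\cK_\tau$. By continuity of the determinant on $\overline{\cK_\tau}\subset\Omega_\tau^M$, the bound extends to the closure, which is \eqref{eq:interior-det-lower}.

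\emph{Main obstacle.} The only delicate point is making sure that $\partial\cK_\tau$ lies strictly inside $\Omega_\tau^M$. This guarantees that $\log|\det D^2q|$ is continuous up to $\partial\cK_\tau$ and that no one-sided slit limits enter. It follows from the right-hand inclusion in \eqref{eq:core-sandwich}: $\overline{\cK_\tau}$ avoids $\overline{\mathcal U_\tau^-}$, which covers $\partial\Omega_\tau^M$ by \eqref{eq:boundary-neighborhood-cover}. Points of $D_\tau\times\{0\}$ are interior points of $\cG_{D_\tau}$, and there $V_{D_\tau,\eps}$ is smooth, so they cause no difficulty.
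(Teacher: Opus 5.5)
Your proposal is correct and follows the paper's proof. You set $q=-V_{D_\tau,\eps}$ and read off $n_-(D^2q)=1$, $n_0(D^2q)=0$ from $\tau\in\cT_{M,\eps}$; you then apply the superharmonicity of $\log|\det D^2q|$ from \cref{thm:det-min} together with the minimum principle on each component of $\cK_\tau$, using \eqref{eq:boundary-det-lower} on $\partial\cK_\tau$. Your additional checks, that $\partial\cK'\subset\partial\cK_\tau$ and that $\overline{\cK_\tau}$ stays inside $\Omega_\tau^M$ away from the slit and the truncation sphere, only make explicit what the paper's short argument leaves implicit.
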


\begin{proof}
Set $q_\tau=-V_{D_\tau,\eps}$. If $\tau\in\cT_{M,\eps}$, then $n_-(D^2q_\tau)=1$ and $n_0(D^2q_\tau)=0$ in $\Omega_\tau^M$. By \cref{thm:det-min},
\[
 \log|\det D^2q_\tau|
\]
is superharmonic on every connected component of $\cK_\tau$. The boundary lower bound \eqref{eq:boundary-det-lower} and the minimum principle give \eqref{eq:interior-det-lower}.
\end{proof}

\subsection{Open--closed argument}

\begin{proposition}\label{prop:T-open-closed}
The set $\cT_{M,\eps}$ is both open and closed in $[0,1]$.
\end{proposition}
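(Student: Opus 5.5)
The proof splits into openness and closedness. Both rest on the same decomposition of $\Omega_\tau^M$ into the boundary neighborhood $\overline{\mathcal U_\tau^+}\cap\Omega_\tau^M$, where \eqref{eq:boundary-det-lower} holds for every $\tau$ regardless of membership in $\cT_{M,\eps}$, and the core $\overline{\cK_\tau}$. By \eqref{eq:core-sandwich}, these two pieces cover $\Omega_\tau^M$. So the only issue in either direction is the inertia on the core, and for that I use \cref{prop:C2-continuity} (with $V_{D_\tau,\eps}$ in place of $W_{D_\tau}$) together with the Hausdorff convergence of $\Psi_\tau^{-1}(\overline{\cK_\tau})$ to $\overline{\cK_{\tau_*}}$.

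\emph{Closedness.} Let $\tau_k\in\cT_{M,\eps}$ with $\tau_k\to\tau_*$. By \cref{lem:interior-det-lower}, $|\det D^2V_{D_{\tau_k},\eps}|\ge\delta_{M,\eps}$ on $\overline{\cK_{\tau_k}}$. Fix $X\in\overline{\cK_{\tau_*}}$. I approximate $X$ by points $X_k\in\Psi_{\tau_k}^{-1}(\overline{\cK_{\tau_k}})$; this uses the Hausdorff convergence in the direction ``every point of the limit is a limit of points,'' which follows from regularity of the defining level sets. All these points lie in a fixed compact set $K\Subset\Omega_{\tau_*}^M$ that stays away from the edge. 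The uniform convergence \eqref{eq:original-hessian-continuity} on $K$, combined with continuity of $D^2V_{D_{\tau_*},\eps}$, then gives $D^2V_{D_{\tau_k},\eps}(\Psi_{\tau_k}(X_k))\to D^2V_{D_{\tau_*},\eps}(X)$. Hence the limit satisfies $|\det|\ge\delta_{M,\eps}$ and $n_+\le1$, $n_-\le n$ by lower semicontinuity of the open conditions. Since the determinant is nonzero, the inertia is exactly $(1,n)$, because eigenvalues cannot cross zero. On the boundary neighborhood, inertia $(1,n)$ already holds for $\tau_*$. Therefore $\tau_*\in\cT_{M,\eps}$.

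\emph{Openness.} Let $\tau_*\in\cT_{M,\eps}$. By \cref{lem:interior-det-lower}, the Hessian $D^2V_{D_{\tau_*},\eps}$ has inertia $(1,n)$ and $|\det|\ge\delta_{M,\eps}$ on the compact set $\overline{\cK_{\tau_*}}$. It is also uniformly bounded there, so every eigenvalue is at least some $c>0$ in absolute value. I enlarge to a compact neighborhood $K'\Subset\Omega_{\tau_*}^M$ of $\overline{\cK_{\tau_*}}$ on which the eigenvalue gap is still at least $c/2$; this is possible by continuity. For $\tau$ near $\tau_*$, the set $\Psi_\tau^{-1}(\overline{\cK_\tau})$ lies in $K'$. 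Uniform convergence on $K'$ keeps the eigenvalues away from zero, so by Weyl's inequality the inertia $(1,n)$ persists on $\overline{\cK_\tau}$. Combined with the boundary neighborhood, this gives $\tau\in\cT_{M,\eps}$.

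The main obstacle is the bookkeeping at the exterior Dirichlet face. Points of $\cK_\tau$ are at positive distance from $\{y=0\}\cap D_\tau^c$ only because $\mathcal U_\tau^-$ covers it, so I must make sure that $K'$ avoids both the edge and that face. If needed, I will instead use the one-sided convergence statement of \cref{prop:C2-continuity}. A second point to check is that this set contains the region where $\Psi_\tau$ is nontrivial near $D_\tau\times\{0\}$, where convergence relies on the even reflection.
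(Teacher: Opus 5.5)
Your proof is correct and follows the paper's argument. It uses the same ingredients: the boundary bound \eqref{eq:boundary-det-lower}, valid for every $\tau$; the sandwich \eqref{eq:core-sandwich}; \cref{lem:interior-det-lower} for the uniform interior determinant bound in the closedness step; and \cref{prop:C2-continuity} together with the Hausdorff convergence of the pulled-back cores. The differences are only in bookkeeping (openness via Weyl's inequality on a compact neighborhood rather than by contradiction; closedness by approximating core points with pulled-back core points rather than pushing forward a fixed $X$). Your closing worry is moot: any compact subset of the open set $\Omega_{\tau_*}^M$ automatically avoids the whole slit $D_{\tau_*}^c\times\{0\}$, so the two-sided interior version of \cref{prop:C2-continuity} suffices.
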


\begin{proof}
\emph{Openness.}
Let $\tau_0\in\cT_{M,\eps}$. Suppose that $\tau_j\to\tau_0$ and that there exist $X_j\in\Omega_{\tau_j}^M$ at which the inertia is not $(1,n)$. By \eqref{eq:boundary-det-lower}, $X_j\notin\overline{\mathcal U_{\tau_j}^+}$; hence \eqref{eq:core-sandwich} gives $X_j\in\cK_{\tau_j}$. Use the diffeomorphisms based at $\tau_0$ and set
\[
 \widehat X_j=\Psi_{\tau_j}^{-1}(X_j).
\]
After passing to a subsequence, $\widehat X_j\to X_0\in\overline{\cK_{\tau_0}}$. By \cref{lem:interior-det-lower},
\[
 |\det D^2V_{D_{\tau_0},\eps}(X_0)|\ge\delta_{M,\eps},
\]
and this Hessian has inertia $(1,n)$. The convergence in \cref{prop:C2-continuity} then gives the same inertia at $X_j$ for all sufficiently large $j$, a contradiction. Thus $\cT_{M,\eps}$ is open.

\smallskip
\noindent\emph{Closedness.}
Let $\tau_j\in\cT_{M,\eps}$ and $\tau_j\to\tau_*$. Fix $X\in\Omega_{\tau_*}^M$ and set $X_j=\Psi_{\tau_j}(X)$. If $X\in\overline{\mathcal U_{\tau_*}^+}$, then \eqref{eq:boundary-det-lower}, together with the continuous pullback construction of these neighborhoods, gives the desired inertia at $X$. If $X\notin\overline{\mathcal U_{\tau_*}^+}$, continuity gives $X_j\notin\mathcal U_{\tau_j}^+$ for all sufficiently large $j$, and \eqref{eq:core-sandwich} implies $X_j\in\cK_{\tau_j}$. Hence
\[
 |\det D^2V_{D_{\tau_j},\eps}(X_j)|
 \ge\delta_{M,\eps}.
\]
The $C^2$ stability in \cref{prop:C2-continuity} gives
\[
 |\det D^2V_{D_{\tau_*},\eps}(X)|
 \ge\delta_{M,\eps}.
\]
The Hessian at $X$ is therefore the limit of matrices with inertia $(1,n)$ and determinant bounded away from zero. Continuity of the ordered eigenvalues gives $\In D^2V_{D_{\tau_*},\eps}(X)=(1,n)$. Since $X\in\Omega_{\tau_*}^M$ was arbitrary, $\tau_*\in\cT_{M,\eps}$, and $\cT_{M,\eps}$ is closed.
\end{proof}

Connectedness of $[0,1]$ and $0\in\cT_{M,\eps}$ give
\begin{equation}\label{eq:T-all}
 \cT_{M,\eps}=[0,1].
\end{equation}
In particular, for the endpoint domain $D=D_1$,
\begin{equation}\label{eq:perturbed-target-inertia}
 \In D^2(W_D+\eps Q_n)=(1,n)
 \qquad\text{in }\cG_D\cap B_M^N.
\end{equation}

\subsection[Limit as epsilon tends to zero]{Limit as $\eps\downarrow0$}

For fixed $M\ge M_0$, choose any sequence $\eps_k\downarrow0$ satisfying \eqref{eq:epsilon-choice}. The Hessians converge pointwise to $D^2W_D$. The set of symmetric matrices with at most one positive eigenvalue is closed, because ordered eigenvalues depend continuously on the matrix. Hence
\begin{equation}\label{eq:closed-one-positive-M}
 n_+(D^2W_D)\le1
 \qquad\text{in }\cG_D\cap B_M^N.
\end{equation}
The argument may be repeated for every $M\ge M_0$. Since every point of $\cG_D$ belongs to $B_M^N$ for all sufficiently large $M$,
\begin{equation}\label{eq:closed-one-positive}
 n_+(D^2W_D)\le1
 \qquad\text{throughout }\cG_D.
\end{equation}
The inequality $n_+(D^2W_D)\le1$ is stable under matrix limits but does not exclude zero eigenvalues. Nondegeneracy is proved in the next section.

\section{Nondegeneracy and strict concavity in the smooth uniformly convex case}\label{sec:smooth-strict}

\subsection{Nondegeneracy from the Gleason--Wolff zero-set theorem}

Choose $x_*\in D$. Every point in the upper, respectively lower, half-space can be joined within that half-space to $(x_*,1)$, respectively $(x_*,-1)$, and
\[
 \{x_*\}\times[-1,1]\subset\cG_D.
\]
Thus $\cG_D$ is path connected. Set
\[
 q=-W_D.
\]
By \eqref{eq:closed-one-positive}, $D^2q$ has at most one negative eigenvalue throughout $\cG_D$. On the other hand, \cref{lem:far-point} provides a point at which $D^2W_D$, and hence $D^2q$, is nonsingular. Therefore \cref{cor:GW-propagation} gives
\begin{equation}\label{eq:strict-extension-inertia-smooth}
 \det D^2W_D\ne0
 \qquad\text{throughout }\cG_D.
\end{equation}
Since $W_D$ is harmonic, $\tr D^2W_D=0$. The determinant is now nonzero, so no eigenvalue vanishes. A trace-free matrix cannot have all eigenvalues of the same sign. Combined with the bound of at most one positive eigenvalue, this leaves exactly one positive and $n$ negative eigenvalues.
Consequently,
\begin{equation}\label{eq:full-inertia-smooth}
 \In D^2W_D=(1,n)
 \qquad\text{throughout }\cG_D.
\end{equation}

\subsection[Positivity of the normal second derivative]{Positivity of $W_{yy}$ on $D\times\{0\}$}

For $y>0$, define
\begin{equation}\label{eq:Z}
 Z=(W_D)_y=1+(U_D)_y.
\end{equation}
The derivative of a harmonic function is harmonic, so $Z$ is harmonic in the upper half-space. On the interior trace set,
\begin{equation}\label{eq:Z-interior}
 Z=0\qquad\text{on }D\times\{0\}.
\end{equation}
On the relative interior of the exterior Dirichlet boundary, $x\in\R^n\setminus\overline D$,
\begin{equation}\label{eq:Z-exterior}
 Z(x,0+)
 =1+c_n\int_D\frac{u_D(z)}{|x-z|^N}\,\dd z>0.
\end{equation}
Moreover, $Z(X)\to1$ as $|X|\to\infty$. Near the slit edge, \eqref{eq:edge-expansion} gives
\[
 Z=a_1(z)\,\partial_y h+O(\rho^{1/2}),
 \qquad
 \partial_y h=\frac{\sin(\theta/2)}{2\sqrt\rho}\ge0
 \quad(0\le\theta\le\pi).
\]
For $0<\delta<\rho_0$, define the upper tubular neighborhood
\[
 \mathcal N_\delta^+
 :=\{(x,y):y>0,\ \dist((x,y),\partial D\times\{0\})<\delta\}.
\]
On $\partial\mathcal N_\delta^+\cap\{y>0\}$, the preceding expansion gives
\[
 Z\ge-C\delta^{1/2}.
\]
On the portion of $\{y=0\}$ outside $\mathcal N_\delta^+$, equations \eqref{eq:Z-interior}--\eqref{eq:Z-exterior} give $Z\ge0$. Since $Z(X)\to1$ as $|X|\to\infty$, one has $Z\ge1/2$ on the spherical boundary of a sufficiently large upper half-ball. Applying the minimum principle to
\[
 (B_R^N\cap\R^N_+)\setminus\overline{\mathcal N_\delta^+}
\]
gives
\[
 Z\ge-C\delta^{1/2}
\]
there. Letting $R\to\infty$ and then $\delta\downarrow0$ gives $Z\ge0$ in $\R^N_+$. Since $Z$ is harmonic and is not identically zero, the strong maximum principle gives
\begin{equation}\label{eq:Z-positive}
 Z>0\qquad\text{in }\R^N_+.
\end{equation}
Fix $x\in D$. In a small upper half-ball centered at $(x,0)$, the function $Z$ is positive inside and vanishes on the flat part of the boundary. The inward unit normal there is $+e_y$, so the Hopf boundary lemma gives
\begin{equation}\label{eq:Wyy-positive}
 (W_D)_{yy}(x,0)=Z_y(x,0)>0.
\end{equation}
Using \eqref{eq:bottom-block},
\[
 D^2W_D(x,0)
 =\begin{pmatrix}
 D_x^2u_D(x)&0\\
 0&(W_D)_{yy}(x,0)
 \end{pmatrix}.
\]
The extended Hessian has inertia $(1,n)$ by \eqref{eq:full-inertia-smooth}, and its $yy$ entry is positive by \eqref{eq:Wyy-positive}. Hence the $x$-Hessian block is negative definite:
\begin{equation}\label{eq:D2u-strict-smooth}
 D^2u_D(x)<0
 \qquad(x\in D).
\end{equation}
This proves \cref{thm:main} for smooth uniformly convex domains.

\section{Approximation by smooth uniformly convex domains}\label{sec:approximation}

Let $D\subset\R^n$ be an arbitrary bounded convex domain. Choose $x_0\in D$ and numbers $0<t_1<t_2<\cdots\uparrow1$. The compact convex bodies
\[
 K_j=x_0+t_j(\overline D-x_0)
\]
satisfy $K_j\subset\operatorname{int}K_{j+1}\Subset D$, and their interiors exhaust $D$. We record a support-function construction of the required smooth intermediate bodies; see also \cite[Section~3.3]{Schneider2014}. Set
\[
 \delta_j=\dist(K_j,\partial K_{j+1})>0
\]
and choose $\eta_j>0$ with $3\eta_j<\delta_j$. Let $h_j^{\mathrm{sm}}$ be a rotational convolution of the support function $h_{K_j}$, with the convolution scale chosen so that
\[
 \|h_j^{\mathrm{sm}}-h_{K_j}\|_{L^\infty(\Sn^{n-1})}<\eta_j.
\]
Define $L_j$ by the smooth support function
\[
 h_{L_j}=h_j^{\mathrm{sm}}+2\eta_j.
\]
The support-function inequalities imply
\[
 K_j\subset\operatorname{int}L_j
 \subset K_j+3\eta_jB_1
 \subset\operatorname{int}K_{j+1}.
\]
Moreover, the radius-of-curvature tensor satisfies
\[
 \nabla^2_{\Sn^{n-1}}h_{L_j}+h_{L_j}g_{\Sn^{n-1}}
 \ge2\eta_jg_{\Sn^{n-1}}>0.
\]
Consequently, $L_j$ is a $C^\infty$ convex body with strictly positive principal curvatures and
\[
 K_j\subset\operatorname{int}L_j,
 \qquad
 L_j\subset\operatorname{int}K_{j+1}.
\]
Since
\[
 L_j\subset\operatorname{int}K_{j+1}
 \subset\operatorname{int}L_{j+1},
\]
the domains $D_j:=\operatorname{int}L_j$ satisfy
\begin{equation}\label{eq:domain-approx}
 D_j\Subset D_{j+1}\Subset D,
 \qquad
 \bigcup_{j=1}^\infty D_j=D.
\end{equation}

Let $u_j=u_{D_j}$ and let $u=u_D$. By domain monotonicity,
\begin{equation}\label{eq:uj-monotone}
 0\le u_j\le u_{j+1}\le u.
\end{equation}
The closed subspaces $H^{1/2}_0(D_j)$ are nested. Their union is dense in $H^{1/2}_0(D)$: every compactly supported smooth function in $D$ is supported in $D_j$ for all sufficiently large $j$, and such functions are dense by definition. For $v\in H^{1/2}_0(D_j)$, the right-hand sides of the weak equations for $u$ and $u_j$ agree, because $v$ vanishes outside $D_j$. Hence
\[
 \cE(u-u_j,v)=0
 \qquad(v\in H^{1/2}_0(D_j)).
\]
Thus $u_j$ is the orthogonal projection of $u$ onto $H^{1/2}_0(D_j)$ with respect to the energy inner product. Orthogonal projections onto an increasing family of closed subspaces converge strongly to the projection onto the closure of their union. That closure is $H^{1/2}_0(D)$, so the fractional Poincar\'e inequality on the fixed bounded set $D$ gives
\begin{equation}\label{eq:energy-convergence}
 u_j\longrightarrow u
 \qquad\text{strongly in }H^{1/2}(\R^n).
\end{equation}
Because all functions vanish outside the same bounded set, \eqref{eq:energy-convergence} implies
\begin{equation}\label{eq:L1-convergence}
 \|u_j-u\|_{L^1(\R^n)}\longrightarrow0.
\end{equation}

Let $U_j,U$ be the corresponding Poisson extensions and put
\[
 W_j(x,y)=U_j(x,|y|)+|y|,
 \qquad
 W(x,y)=U(x,|y|)+|y|.
\]
If $K\Subset\R^N_+$ and $|\gamma|\le2$, then the derivatives $D_X^\gamma$ of the Poisson kernel are uniformly bounded on $K\times D$. Therefore \eqref{eq:L1-convergence} gives
\begin{equation}\label{eq:C2-extension-convergence}
 W_j\longrightarrow W
 \qquad\text{in }C^2(K).
\end{equation}
For every $j$, the smooth-domain result \eqref{eq:full-inertia-smooth} gives
\[
 \In D^2W_j=(1,n)
 \qquad\text{in }\R^N_+.
\]
Inertia is not closed under matrix convergence, since eigenvalues may converge to zero. The set of symmetric matrices with at most one positive eigenvalue is closed. Hence \eqref{eq:C2-extension-convergence} yields
\begin{equation}\label{eq:limit-one-positive-upper}
 n_+(D^2W)\le1
 \qquad\text{in }\R^N_+.
\end{equation}
Reflection across $y=0$ conjugates the Hessian by $\diag(I_n,-1)$, so the same inequality holds in the lower half-space. At every $x\in D$, interior elliptic regularity for the fractional Laplacian \cite{Grubb2015} and weak Neumann reflection imply that $W$ is smooth and harmonic in a full neighborhood of $(x,0)$. Letting $y\downarrow0$ gives $n_+(D^2W(x,0))\le1$. Thus
\begin{equation}\label{eq:limit-one-positive-slit}
 n_+(D^2W)\le1
 \qquad\text{throughout }
 \cG_D=\R^N\setminus(D^c\times\{0\}).
\end{equation}

Choose $x_*\in D$. The segment $\{x_*\}\times[-1,1]$ joins the upper and lower half-spaces inside $\cG_D$, so $\cG_D$ is path connected. Apply \cref{cor:GW-propagation} to $q=-W$. Its Hessian has at most one negative eigenvalue by \eqref{eq:limit-one-positive-slit}, while \cref{lem:far-point} provides a nonsingular point. The corollary first gives $\det D^2W\ne0$ throughout $\cG_D$. Since $\tr D^2W=0$ and $n_+(D^2W)\le1$, the nonsingular Hessian has at least one positive eigenvalue and therefore exactly one; the remaining $n$ eigenvalues are negative. Hence
\begin{equation}\label{eq:limit-full-inertia}
 \In D^2W=(1,n)
 \qquad\text{throughout }\cG_D.
\end{equation}
This use of the Gleason--Wolff zero-set theorem is necessary because $C^2_{\mathrm{loc}}$ convergence alone does not exclude zero eigenvalues.

It remains to prove that $W_{yy}(x,0)>0$ for $x\in D$. Put
\[
 Z_j=(W_j)_y,
 \qquad
 Z=W_y
 \qquad(y>0).
\]
By \eqref{eq:Z-positive}, $Z_j>0$ in the upper half-space. The convergence \eqref{eq:C2-extension-convergence} implies $Z_j\to Z$ locally uniformly, and hence $Z\ge0$. The far-field expansion gives $Z(X)\to1$ as $|X|\to\infty$, so $Z$ is not identically zero. The strong maximum principle yields
\begin{equation}\label{eq:limit-Z-positive}
 Z>0
 \qquad\text{in }\R^N_+.
\end{equation}
Fix $x\in D$. On a sufficiently small upper half-ball centered at $(x,0)$, the function $Z$ is harmonic, vanishes on the flat part of the boundary, and is positive inside. The Hopf lemma gives
\begin{equation}\label{eq:limit-Wyy-positive}
 W_{yy}(x,0)=Z_y(x,0)>0.
\end{equation}
The even reflection gives the block decomposition
\begin{equation}\label{eq:limit-bottom-block}
 D^2W(x,0)
 =\begin{pmatrix}
 D^2u(x)&0\\
 0&W_{yy}(x,0)
 \end{pmatrix}.
\end{equation}
By \eqref{eq:limit-full-inertia}, the extended Hessian has one positive and $n$ negative eigenvalues. Since the $yy$ entry in \eqref{eq:limit-bottom-block} is positive, the $x$-Hessian block is negative definite, and therefore
\begin{equation}\label{eq:limit-D2u-strict}
 D^2u(x)<0
 \qquad\text{for every }x\in D.
\end{equation}
Interior elliptic regularity for $(-\Delta)^{1/2}$ \cite{Grubb2015} gives $u\in C^\infty(D)$, so \eqref{eq:limit-D2u-strict} holds classically.

To obtain the line-segment formulation of strict concavity, take distinct $x_0,x_1\in D$ and set
\[
 g(t)=u((1-t)x_0+tx_1),
 \qquad0\le t\le1.
\]
Convexity of $D$ places the entire segment in $D$, and \eqref{eq:limit-D2u-strict} gives
\[
 g''(t)
 =(x_1-x_0)^{\mathsf T}D^2u((1-t)x_0+tx_1)(x_1-x_0)<0
 \qquad(0<t<1).
\]
Thus $g$ is strictly concave on $[0,1]$, which proves the chord statement and completes the proof of \cref{thm:main}.

\appendix

\section[Explicit (r,y)-Hessian determinant for the unit ball]{Explicit computation of the $(r,y)$-Hessian determinant for the unit ball}\label{app:ball-algebra}

We derive \eqref{eq:ball-plane-det}. Set
\[
 A=1+\xi^2,
 \qquad
 B=1-\eta^2,
 \qquad
 \delta_{\mathrm{obl}}=\xi^2+\eta^2,
 \qquad
 K=f_n-\xi f_n'.
\]
The ODE for $f_n$ is
\begin{equation}\label{eq:ball-ode-app}
 Af_n''+n\xi f_n'-nf_n=0.
\end{equation}
Formula \eqref{eq:oblate-derivatives} gives
\[
 \frac{(U_B)_r}{r}=-\frac{\eta K}{\delta_{\mathrm{obl}}}.
\]
Differentiate once more, substitute the inverse-coordinate formulas, and use the ODE \eqref{eq:ball-ode-app} to eliminate $f_n''$. After collecting the common factor $K$, one obtains
\[
 (U_B)_{rr}=\frac{\eta K}{\delta_{\mathrm{obl}}^3}H_n,
 \qquad
 (U_B)_{ry}=\frac{\xi\sqrt{AB}\,K}{\delta_{\mathrm{obl}}^3}J_n,
\]
where
\begin{align*}
 H_n&=-\delta_{\mathrm{obl}}^2
 +n\xi^2B\delta_{\mathrm{obl}}+AB(3\xi^2-\eta^2),\\
 J_n&=(n+3)\xi^2\eta^2+(n-1)\eta^4+3\eta^2-\xi^2.
\end{align*}
Harmonicity gives
\[
 (U_B)_{yy}=-(U_B)_{rr}-(n-1)\frac{(U_B)_r}{r}.
\]
Substituting these expressions into the determinant reduces the calculation to the polynomial identity
\[
 \eta^2H_n\{(n-1)\delta_{\mathrm{obl}}^2-H_n\}
 -AB\xi^2J_n^2
 =-A\delta_{\mathrm{obl}}^3\{1+(n-1)\eta^2\}.
\]
To verify this identity, put $X=\xi^2$ and $Y=\eta^2$, so that $A=1+X$, $B=1-Y$, and $\delta_{\mathrm{obl}}=X+Y$. After substituting the definitions of $H_n$ and $J_n$, the left-hand side is affine in $n$. Its constant coefficient is
\[
 (1+X)(X+Y)^3(Y-1),
\]
and its coefficient of $n$ is
\[
 -Y(1+X)(X+Y)^3.
\]
There is no $n^2$ term. Adding the two contributions gives
\[
 -(1+X)(X+Y)^3\{1+(n-1)Y\},
\]
which is exactly the right-hand side. The identity therefore yields
\[
 (U_B)_{rr}(U_B)_{yy}-(U_B)_{ry}^2
 =-\frac{AK^2\{1+(n-1)\eta^2\}}{\delta_{\mathrm{obl}}^3},
\]
which is \eqref{eq:ball-plane-det}.

\section*{Acknowledgments}
The authors thank Professor Xi-Nan Ma for bringing this problem to their attention. The first author is supported by the National Natural Science Foundation of China [Grant No. 2025YFA1017601]. The second author is  supported by the Program for Young Talents of Basic Research in University of Heilongjiang Province [Grant No. YQJH2025116].  The authors acknowledge the use of AI tools. All mathematical statements and proofs were independently verified by the authors, who take full responsibility for the content of the manuscript.

\end{document}